\newcommand{\CS}{\mathsf{CS}}
\newcommand{\APP}{\mathsf{APP}}
\newcommand{\Tm}{\mathsf{Tm}}
\newcommand{\ATm}{\mathsf{ATm}}
\newcommand{\TmA}{\mathsf{Tm}^{\mathsf{A}}}
\newcommand{\TmP}{\mathsf{Tm}^{\mathsf{P}}}
\newcommand{\Prop}{\mathsf{Prop}}
\newcommand{\Logic}{\mathsf{L}}
\newcommand{\LJstar}{\mathsf{L^\star}}
\newcommand{\LogicAPP}{\mathsf{L^\mathsf{A}}}
\newcommand{\LJCSapp}{\mathsf{L^\mathsf{A}_{CS}}}
\newcommand{\Lcp}{\mathsf{L_{cp}}}
\newcommand{\CL}{\mathsf{CL}}
\newcommand{\LJCS}{\mathsf{L^{\star}_{CS}}}
\newcommand{\LJprob}{\mathsf{L_{prob}}}
\newcommand{\Lstarcomp}{\mathsf{L^{\star}_\alpha}}
\newcommand{\Lstarvol}{\mathsf{L^{\star}_\beta}}
\newcommand{\Lappcomp}{\mathsf{L^\mathsf{A}_\alpha}}
\newcommand{\Lappvol}{\mathsf{L^\mathsf{A}_\beta}}
\newcommand{\LJ}{\mathcal{L}_J}
\newcommand{\LJA}{\mathcal{L}_J^{\mathsf{A}}}
\newcommand{\cstar}{\mathsf{c}^\star}
\newcommand{\cstarterm}{\mathsf{c}^\star\textbf{-term}}
\newcommand{\cstarterms}{\mathsf{c}^\star\textbf{-terms}}
\newcommand{\model}{\mathcal{M}}
\newcommand{\WMP}{W_{MP}}
\newcommand{\WMPC}{W_{MP}^C}
\newcommand{\WnullC}{W_0^C}
\newcommand{\powerset}{\mathcal{P}}
\newcommand{\axjplus}{\textbf{j+}}
\newcommand{\axjcstar}{\textbf{jc\textsuperscript{$\star$}}}
\newcommand{\axj}{\textbf{j}}
\newcommand{\axjfour}{\textbf{j4}}
\newcommand{\axjd}{\textbf{jd}}
\newcommand{\axjt}{\textbf{jt}}
\newcommand{\axcl}{\textbf{cl}}
\newcommand{\PE}{\mathsf{PE}}
\newcommand{\AEGX}{\mathsf{AE^\Gamma(X)}}
\newcommand{\xLongLeftRightArrow}[2][]{\ext@arrow0055{\LongLeftRightArrowfill@}{#1}{#2}}
\def\LongLeftRightArrowfill@{\arrowfill@\Leftarrow\Relbar\Rightarrow}
\def\th@plain{%
	\thm@notefont{}
	\itshape 
}
\newcommand{\todo}[1]{\marginpar{\textbf{TODO\footnotemark}}\@latex@warning{TODO: #1}\footnotetext{ #1}}
\def\th@Definition{%
	\thm@notefont{}
	\normalfont 
}
\newtheorem{thm}{Theorem} 
\newtheorem{lem}[thm]{Lemma} 
\newtheorem{defn}[thm]{Definition} 
\newtheorem{remark}[thm]{Remark}
\begin{document}

\author{Eveline Lehmann \and Thomas Studer}

\title{Subset models for justification logic\thanks{This work was supported by the Swiss National Science Foundation grant 200021$\_$165549.}}

\maketitle

\begin{abstract}
	We introduce a new semantics for justification logic based on subset relations. Instead of using the established and more symbolic interpretation of justifications, we model justifications as sets of possible worlds. 
	We introduce a new justification logic that is sound and complete with respect to our semantics. Moreover, we present another variant of our semantics that corresponds to traditional justification logic.
	
	These types of models offer us a versatile tool to work with justifications, e.g.~by extending them with a probability measure to capture uncertain justifications. Following this strategy we will show that they subsume Artemov's approach to aggregating probabilistic evidence.
\end{abstract}

\section{Introduction}

Justification logic is a variant of modal logic that includes terms representing explicit evidence. A formula of the form $t:A$ means that \emph{$t$ justifies $A$} (or \emph{$t$ represents evidence for $A$}, or \emph{$t$ is a proof of $A$}). Justification logic has been introduced by Artemov~\cite{Artemov1995OperationalModalLogic,Artemov2001ExplicitProvability} to give a classical provability interpretation to $\mathsf{S4}$. Later it turned out that this approach is not only useful in proof theory~\cite{Artemov2001ExplicitProvability,KSweak} but also in epistemic logic~\cite{Art06TCS,Art08RSL,BucKuzStu11JANCL,BucKuzStu14Realizing}.
For a general overview on justification logic, we refer to~\cite{ArtFit11SEP,artemovFittingBook,justificationLogic}.

There are various kinds of semantics available for justification logic. Most of them interpret justification terms in a symbolic way. In provability interpretations~\cite{Artemov2001ExplicitProvability,KSweak}, terms represent (codes of) proofs in formal system like Peano arithmetic. In Mkrtychev models~\cite{Mkr97LFCS}, which are used to obtain decidability, terms are represented as sets of formulas. In Fitting models~\cite{Fit05APAL}, the evidence relation maps pairs of terms and possible worlds to sets of formulas. In modular models~\cite{Art12SLnonote,KuzStu12AiML}, the logical type a justification is a set of formulas, too. Notable exceptions are~\cite{artemov2016onAggregatingPE,Artemov2008TopoJL} where terms are interpreted as sets of possible worlds. However, these papers do not consider the usual term structure of justification logics. Also note that there are topological approaches to evidence available~\cite{Baltag2016JustifiedBA,Benthem2012EvidenceLA,Benthem2014EvidenceAP}, which, however, do not feature justifications explicitly in their language.

It is the aim of this paper to provide a new semantics, called \emph{subset semantics} for justification logic that interprets terms as sets of possible worlds and operations on terms as operations on sets of possible worlds. We will then say that $t:A$ is true if $A$ is true in all worlds belonging to the interpretation of $t$.
We give a systematic study of this new semantics including soundness and completeness results and we show that the approach of~\cite{artemov2016onAggregatingPE} can be seen as a special case of our semantics.

Usually, justification logic includes an application operator that represents modus ponens (MP) on the level of terms.
We provide two approaches to handle this operator in our semantics. The first is to include a new constant $\cstar$, which is interpreted as the set of all worlds closed under (MP) and then use this new constant to define an application operator. 
The second way is to include a 
application operator directly. However, this leads to some quite cumbersome definitions.

Another difference between our semantics and many other semantics for justification logic is that we allow non-normal (impossible) worlds. They are usually needed to model the fact that agents are not omniscient and that they do not see all  consequences of the facts they are already aware of.
In an impossible world both $A$ and $\neg A$ may be true or none of them. This way of using impossible worlds was investigated by Veikko Rantala~\cite{rantala1982a,rantala1982b}. 

We start with presenting the $\cstar$-subset models with the corresponding syntax, axioms and semantics and proving soundness and completeness. In a second part we will present the alternative approach, i.e.~keeping the (j)-axiom and dealing with some cumbersome definitions within the semantics. It will be shown that the corresponding models are sound and complete as well. In a last section we will show that $\cstar$-subset models can be used to reason about uncertain knowledge by referring to Artemov's work on aggregating probabilistic evidence.

\section{$\LJCS$-subset models}

\subsection{Syntax}
Justification terms are built from countably many constants $c_i$ and variables $x_i$ and the special and unique constant $\cstar$ according to the following grammar:
\[
t::=c_i \ |\  x_i\ | \ \cstar\ | \ (t+t) \ | \ !t 
\]
The set of terms is denoted by $\Tm$. The set of atomic terms, i.e. terms that do not contain any operator $+$ or $!$ is denoted by $\ATm$.
The operation $+$ is left-associative. \\
Formulas are built from countably many atomic propositions $p_i$ and the  symbol~$\perp$ according to the following grammar:
\[
F::=p_i \ | \ \perp \ | \ F\to F \ | \ t:F
\]
The set of atomic propositions is denoted by $\Prop$ and the set of all formulas is denoted by $\LJ$. The other classical Boolean connectives $\neg,\top, \land,\lor,\leftrightarrow$ are defined as usual. 

%
\begin{defn}[$\cstarterm$] A $\cstarterm$ is defined inductively as follows:
	\begin{itemize}
		\item  $\cstar$ is a $\cstarterm$
		\item if $s$ and $t$ are terms and $c$ is a $\cstarterm$ then $s+c$ and $c+t$ are $\cstarterms$
	\end{itemize}
\end{defn}
So a $\cstarterm$ is either $\cstar$ itself or a sum-term where $\cstar$ occurs at least once.

%
We investigate a family of justification logics that differ in their axioms and how the axioms are justified.  We have two sets of axioms, the first axioms are:
\begin{fleqn}
\begin{equation}
\begin{array}{ll}\nonumber
\axcl & \text{all axioms of classical propositional logic};\\
\axjplus & s:A\lor t:A\to (s+t): A;\\
\axjcstar & c:A\land c:(A\to B) \to c:B \quad\text{ for all }\cstarterms \  c.
\end{array}
\end{equation}
\end{fleqn}
The set of these axioms is denoted by $\Lstarcomp$. \\
There is another set of axioms:
\begin{fleqn}
	\begin{equation}
	\begin{array}{ll}\nonumber
	\axjfour & t:A\to !t:(t:A);\\
	\axjd & t:\perp\to\perp;\\
	\axjt & t:A\to A.
	\end{array}
	\end{equation}
\end{fleqn}
This set is denoted by $\Lstarvol$.
It is easy to see that \axjd \ is a special case of \axjt.
By~$\LJstar$ we denote all logics that are composed from the whole set $\Lstarcomp$ and some subset of $\Lstarvol$.
Moreover, a justification logic $\LJstar$ is defined by the set of axioms and its constant specification $\CS$ that determines which constant justifies which axiom. So the constant specification is a set
\[\CS\subseteq\{(c, A)\enspace|\enspace c\text{ is a constant and } A\text{ is an axiom of }\LJstar\}\]
In this sense $\LJCS$ denotes the logic $\LJstar$ with the constant specification $\CS$. To deduce formulas in $\LJCS$ we use a Hilbert system given by $\LJstar$ and the rules modus ponens:
\begin{prooftree}
	\AxiomC{$A$}
	\AxiomC{$A\to B$}
	\RightLabel{(MP)}
	\BinaryInfC{$B$}
\end{prooftree}
and axiom necessitation

\begin{prooftree}
	\AxiomC{}
	\RightLabel{(AN!)\quad $\forall n\in\mathbb{N}$, where $(c, A)\in\CS$}
	\UnaryInfC{$\underbrace{!...!}_{n}:\underbrace{!...!}_{n-1}:\ ...:\ !!c:\ !c:c:A$}
\end{prooftree}

\begin{defn}[axiomatically appropriate $\CS$]\label{def:cs_axiomatocally_appropriate}
	A constant specification $\CS$ is called \emph{axiomatically appropriate} if for each axiom $A$, there is a constant $c$ with $(c,A) \in \CS$.
\end{defn}

\subsection{Semantics}

\begin{defn}[$\LJCS$-subset models] \label{cstar-subset models}Given some logic $\LJstar$ and some constant specification $\CS$, then an $\LJCS$-subset model $\mathcal{M}=(W, W_0, V, E)$ is defined by:
\begin{itemize}
\item $W$ is a set of objects called worlds.
\item $W_0\subseteq W$ and $W_0\neq\emptyset$ .
\item $V: W\times\LJ\to \{0, 1\}$  such that for all $\omega\in W_0$, $t\in\Tm$, $F, G\in\LJ$:
	\begin{itemize}
	\item $V(\omega, \perp)=0$;
	\item $V(\omega, F\to G)=1\quad\text{ iff }\quad V(\omega,F)=0$ or $V(\omega, G)=1$;
	\item\label{condition_tF} $V(\omega, t:F)=1\quad\text{ iff }\quad E(\omega, t)\subseteq\Set{\upsilon\in W | V(\upsilon, F)=1}$.
	\end{itemize}
\item $E: W\times\Tm \to \mathcal{P}(W)$ that meets the following conditions
where we use 
\begin{equation}\label{eq:truthset:1}
[A]:=\{\omega\in W\enspace|\enspace V(\omega, A)=1\}.
\end{equation}
For all $\omega\in W_0$, and for all $s, t\in\Tm$:
	\begin{itemize}
	\item $E(\omega, s+t)\subseteq E(\omega, s)\cap E(\omega, t)$;
	\item $E(\omega, \cstar)\subseteq\WMP$ where $\WMP$ is the set of deductively closed worlds, see below;
	\item if  \axjd~$\in\LJstar$, then $\exists\upsilon\in W_0$ with $\upsilon\in E(\omega, t)$;
	\item if \axjt~$\in\LJstar$, then $ \omega\in E(\omega, t)$;
	\item if \axjfour~$\in\LJstar$, then 
	\begin{multline*}
E(\omega, !t)\subseteq \\ \Set{\upsilon\in W|\forall F\in\LJ \ (V(\omega, t:F)=1\Rightarrow V(\upsilon, t:F)=1)};
\end{multline*}
	\item for all $n\in\mathbb{N}$ and for all $(c, A)\in\CS: E(\omega, c)\subseteq[A]$ and 
	\[E(\omega, \underbrace{!...!}_{n}c)\subseteq[\underbrace{!...!}_{n-1}c:....!c:c:A].\]

	\end{itemize}
\end{itemize}
The set $\WMP$ is formally defined as follows:
\begin{align*}
\WMP:=\{\omega\in W\ | \ \forall A, B\in\LJ \ (&(V(\omega, A)=1\text{ and }
V(\omega, A\to B)=1)\\ &\text{ implies } V(\omega, B)=1)\}.
\end{align*}
\end{defn}
So $\WMP$ collects all the worlds where the valuation function is closed under modus ponens. 
$W_0$ is the set of \emph{normal} worlds. The set $W \setminus W_0$ consists of the \emph{non-normal} worlds. 
Moreover, using the notation introduced by \eqref{eq:truthset:1}, we can read the condition on $V$ for justification terms $t:F$ as:
\[V(\omega, t:F)=1\quad\text{ iff }\quad E(\omega, t)\subseteq[F]\]

Since the valuation function $V$ is defined on worlds and formulas, the definition of truth is pretty simple:

\begin{defn}[Truth in $\LJCS$-subset models]\label{cstar-truth} Given an $\LJCS$-subset model $\mathcal{M}=(W, W_0, V, E)$ and a world $\omega\in W$ and a formula $F$ we define the relation $\Vdash$ as follows:
\[\mathcal{M}, \omega\Vdash F\quad\text{ iff }\quad V(\omega, F)=1\]
\end{defn}

\begin{remark}
	With the conditions on $E(w,\cstar)$ and $E(w,s+t)$ we obtain the intended meaning of a $\cstarterm \ s+\cstar$, namely that we consider only deductively closed worlds of $s$. 
	However, the set $E(s+\cstar)$ does not have to be exactly the intersection of $E(w, s)$ with $\WMP$ since we only have a subset-relation instead of equality. Hence $E(w, s+\cstar)\neq E(w,\cstar+s)$ in general. So even if in two $\cstarterms$ the exactly same evidence sets occur, their order still matters.	 
	For the same reason $s+t:A\to t+s:A$ is not valid for any two distinct terms $s$ and $t$. 
\end{remark}

\subsection{Soundness}
Since non-normal worlds will not be sound even with respect to the axioms of classical logic, we only have soundness within $W_0$.
\begin{thm}[Soundness of $\LJCS$-subset models]\label{cstar-soundness} For any justification logic $\LJCS$ and any formula $F\in\LJ$:
\[\LJCS\vdash F\quad\Rightarrow\quad\mathcal{M}, \omega\Vdash F\quad\text{ for all }\LJCS\text{-subset models }\mathcal{M}\text{ and all }\omega\in W_0\]
\end{thm}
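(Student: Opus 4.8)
The plan is to prove soundness by induction on the length of the derivation of $F$ in $\LJCS$, fixing an arbitrary $\LJCS$-subset model $\mathcal{M}=(W,W_0,V,E)$ and showing that every theorem is true at every $\omega\in W_0$. The base cases split into the axioms and the conclusions of the rule (AN!); the inductive step handles (MP). Throughout I will freely use the reformulation $V(\omega,t:F)=1$ iff $E(\omega,t)\subseteq[F]$ and the notation $[A]$ from~\eqref{eq:truthset:1}.

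For the classical propositional axioms (\axcl) and the rule (MP), soundness at a normal world $\omega$ is immediate from the conditions imposed on $V$ at worlds in $W_0$: those conditions say precisely that $V(\omega,\cdot)$ behaves like a classical Boolean valuation, so any propositional tautology gets value $1$ and (MP) preserves value $1$. For \axjplus, suppose $V(\omega,s:A)=1$ or $V(\omega,t:A)=1$, i.e.\ $E(\omega,s)\subseteq[A]$ or $E(\omega,t)\subseteq[A]$; since $E(\omega,s+t)\subseteq E(\omega,s)\cap E(\omega,t)$, in either case $E(\omega,s+t)\subseteq[A]$, hence $V(\omega,(s+t):A)=1$. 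For \axjcstar\ with $c$ a $\cstarterm$: the conditions on $E$ force $E(\omega,c)\subseteq \WMP$ — this follows by induction on the structure of the $\cstarterm$, using $E(\omega,\cstar)\subseteq\WMP$ as the base case and $E(\omega,s+c')\subseteq E(\omega,s)\cap E(\omega,c')\subseteq E(\omega,c')$ and symmetrically for $c'+t$ in the inductive step. Then if $V(\omega,c:A)=1$ and $V(\omega,c:(A\to B))=1$, we have $E(\omega,c)\subseteq[A]\cap[A\to B]$; for any $\upsilon\in E(\omega,c)$ we get $\upsilon\in\WMP$ and $V(\upsilon,A)=V(\upsilon,A\to B)=1$, so by definition of $\WMP$, $V(\upsilon,B)=1$; hence $E(\omega,c)\subseteq[B]$ and $V(\omega,c:B)=1$. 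The optional axioms \axjt\ (hence \axjd) and \axjfour\ are discharged directly from their matching semantic conditions: for \axjt, $\omega\in E(\omega,t)$ together with $E(\omega,t)\subseteq[A]$ gives $V(\omega,A)=1$; for \axjfour, if $V(\omega,t:A)=1$ then by the condition on $E(\omega,!t)$ every $\upsilon\in E(\omega,!t)$ satisfies $V(\upsilon,t:A)=1$, so $E(\omega,!t)\subseteq[t:A]$ and $V(\omega,!t:(t:A))=1$.

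For the rule (AN!), I must show that for every $(c,A)\in\CS$ and every $n\in\mathbb{N}$ the iterated formula $\underbrace{!\dots!}_{n}{:}\underbrace{!\dots!}_{n-1}{:}\dots{:}\,!!c{:}\,!c{:}c{:}A$ is true at each $\omega\in W_0$. This is exactly what the last bullet of the $E$-conditions is tailored for: $E(\omega,c)\subseteq[A]$ handles $n=0$, giving $V(\omega,c{:}A)=1$; and $E(\omega,\underbrace{!\dots!}_{n}c)\subseteq[\underbrace{!\dots!}_{n-1}c{:}\dots{:}!c{:}c{:}A]$ then gives, level by level, that $V(\omega,\underbrace{!\dots!}_{n}c{:}(\underbrace{!\dots!}_{n-1}c{:}\dots{:}c{:}A))=1$. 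So this case is a straightforward unwinding of the model conditions, once one is careful with the nested-colon notation.

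The only genuine subtlety — and the step I expect to cost the most care — is not in any single axiom but in the form of the induction hypothesis. Truth of a theorem is only guaranteed at normal worlds, yet subformulas of the shape $t:F$ refer, via $E(\omega,t)\subseteq[F]$, to the values of $V$ at \emph{arbitrary} worlds $\upsilon\in W$, including non-normal ones. This is fine because we never need $F$ itself to be \emph{provable-hence-true-everywhere}; we only ever push the argument one $E$-step outward and use whatever $V$ happens to assign at those worlds, exploiting that the relevant $E$-conditions (on $s+t$, $\cstar$, $!t$, and the $\CS$-clauses) are themselves stated in terms of these truth sets $[\cdot]$ over all of $W$. So the proof does not require an auxiliary lemma about non-normal worlds; one just has to resist the temptation to apply the induction hypothesis at a non-normal world, and instead read off the needed inclusion directly from the definition of the model. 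With that caveat, the induction goes through and yields the theorem.
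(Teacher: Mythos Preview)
Your overall strategy is the same as the paper's---induction on the derivation, with each axiom discharged by its matching semantic condition and (MP), (AN!) handled directly---and the details you do spell out are correct.

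There is, however, one genuine gap: your treatment of \axjd. You write ``\axjt\ (hence \axjd)'' and then only give the argument for \axjt. But a logic $\LJCS$ may contain \axjd\ without containing \axjt. In that case the model is only required to satisfy the \axjd-condition (for each $\omega\in W_0$ and each $t$ there exists some $\upsilon\in W_0$ with $\upsilon\in E(\omega,t)$), not the stronger \axjt-condition $\omega\in E(\omega,t)$. So you cannot appeal to the \axjt\ argument. The paper handles this case separately: assuming $V(\omega,t{:}\perp)=1$ gives $E(\omega,t)\subseteq[\perp]$; the \axjd-condition supplies a $\upsilon\in W_0\cap E(\omega,t)$, whence $V(\upsilon,\perp)=1$, contradicting $\upsilon\in W_0$; thus $V(\omega,t{:}\perp)=0$ and the implication $t{:}\perp\to\perp$ is vacuously true. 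The fix is two lines, but as written your ``(hence \axjd)'' does not cover it.

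One minor point where you are actually more explicit than the paper: for \axjcstar\ you spell out the little induction on the structure of the $\cstarterm$ showing $E(\omega,c)\subseteq\WMP$, whereas the paper just asserts this from the definitions. Your closing paragraph on why the induction hypothesis need not be invoked at non-normal worlds is correct and worth keeping; the paper leaves this implicit.
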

\begin{proof}
The proof is by induction on the length of the derivation of $F$:
\begin{itemize}
\item If $F$ is an instance of some axiom of classical logic, then the truth of $F$ only depends on the valuation functions within the worlds of $W_0$. And all worlds of $W_0$ behave appropriately by definition.
\item If $F$ is derived by modus ponens, then there is a $G\in\LJ$ s.t.~$\LJCS\vdash G\to F$ and $\LJCS\vdash G$. By induction hypothesis $\mathcal{M}, \omega\Vdash G\to F$ hence \[V(\omega, G\to F)=1\] and therefore since $\omega\in W_0,\quad V(\omega, G)=0$ or $V(\omega, F)=1$ and again by induction hypothesis $\mathcal{M}, \omega\Vdash G$ and therefore $V(\omega, G)=1$. Because of this and $\omega\in W_0$, we obtain $V(\omega, F)=1$, which is $\mathcal{M}, \omega\Vdash F$. 
\item If $F$ is derived by axiom necessitation, then $F=c:A$ for some $(c, A)\in\CS$. By the condition on $E$ within $\LJCS$-subset models we have $E(\omega, c)\subseteq[A]$ for all $\omega\in W_0$. Hence $V(\omega, c:A)=1$  and therefore $\mathcal{M}, \omega\Vdash c:A$. If $F$ is a more complex formula like $!c:(c:A)$ derived by axiom necessitation, the argument is analogue.
\item If $F$ is an instance of the \axjplus-axiom, then $F=s:A\lor t:A\to s+t:A$ for some $s, t\in\Tm$ and $A\in\LJ$.\\
Suppose wlog.~$\mathcal{M}, \omega\Vdash s:A$, by Definition \ref{cstar-truth} we get $V(\omega, s:A)=1$ and by Definition \ref{cstar-subset models} and the conditions on $V$ for worlds in $W_0$,  $E(\omega, s)\subseteq[A]$. Since $E(\omega, s+t)\subseteq E(\omega, s)\cap E(\omega, t)\subseteq E(\omega, s)$ we obtain that $E(\omega, s+t)\subseteq[A]$ and by the condition on $E$ in $W_0$ in Definition \ref{cstar-subset models} that $V(\omega, s+t:A)=1$. Hence by Definition \ref{cstar-truth} $\mathcal{M}, \omega\Vdash s+t:A$.
%
\item If $F$ is an instance of the \axjcstar-axiom, then \[F=c:A\land c:(A\to B)\to c:B\] for some $A, B\in \LJ$ and a $\cstarterm \ c$.\\
Suppose that $\model,\omega\Vdash c:A$ and $\model,\omega\Vdash c:(A\to B)$. i.e.~$E(\omega,c)\subseteq[A]$ and $E(\omega,c)\subseteq[A\to B]$. Hence for all $\upsilon\in E(\omega,c)$ we obtain $V(\upsilon,A)=1$ and $V(\upsilon, A\to B)=1$.
From the definition of $\cstarterms$, the conditions on $E(w,\cstar)$ and $E(w,s+t)$ for some terms $s,t$, we infer that $E(w,c)\subseteq \WMP$ and we conclude $V(\upsilon,B)=1$ and hence $E(\omega,c)\subseteq[B]$ and this means that $\model, \omega\Vdash c:B$.
\item If F is an instance of the \axjd-axiom, then $F=t:\perp\to\perp$ for some $t\in\Tm$.\\
Suppose  towards a contradiction that $\mathcal{M}, \omega\Vdash t:\perp$ for some $t\in\Tm$, then by Definition \ref{cstar-truth} we obtain that $V(\omega, t:\perp)=1$ and hence by the condition of $E$ in the worlds of $W_0, \quad E(\omega, t)\subseteq[\perp]$. Since $\mathcal{M}$ must be a \axjd-$\LJCS$-subset model we claim that $\exists \upsilon \in W_0$ s.t.~$\upsilon\in E(\omega, t)$. From $\upsilon\in E(\omega, t)$ we derive by the condition on $V$ in Definition \ref{cstar-subset models} $\upsilon\in[\perp]$ or in other words $\upsilon\in(\upsilon'\in W\enspace|\enspace V(\upsilon',\perp)=1)$ and hence $V(\upsilon, \perp)=1$ and this contradicts the claim that $\upsilon\in W_0$.
\item If $F$ is an instance of the \axjt-axiom, then $F=t:A\to A$ for some $A\in\LJ$ and some $t\in\Tm$.\\
Suppose $\mathcal{M}, \omega\Vdash t:A$. By Definition \ref{cstar-truth} we obtain that $V(\omega, t:A)=1$. By the condition on worlds in $W_0$ in Definition \ref{cstar-subset models} we get $E(\omega, t)\subseteq[A]$. Since $\mathcal{M}$ is a \axjt-$\LJCS$-subset model, $\omega\in E(\omega, t)$ and therefore we conclude $\omega\in[A]$. Hence $V(\omega, A)=1$ and by Definition \ref{cstar-truth} we obtain that $\mathcal{M}, \omega\Vdash A$.
\item If $F$ is an instance of the \axjfour-axiom, then $F=t:A\to !t:(t:A)$ for some $A\in\LJ$ and $t\in\Tm$\\
Suppose $\mathcal{M}, \omega\Vdash t:A$, then by Definition \ref{cstar-truth} we obtain that $V(\omega, t:A)=1$. By the condition on $E$ for \axjfour-$\LJCS$-subset models for all $\upsilon\in E(\omega, !t)$ we obtain $V(\upsilon, t:A)=1$. Therefore $E(\omega, !t)\subseteq[t:A]$ and by Definition~\ref{cstar-subset models} there is $V(\omega, !t:(t:A))=1$ and again by Definition~\ref{cstar-truth} we conclude $\mathcal{M}, \omega\Vdash\ !t:(t:A)$.
\qedhere
\end{itemize}
\end{proof}

The  \axj-axiom $s:(A\to B)\to(t:A\to s\cdot t:B)$ is not part of our logic. 
Using the $(\cstar)$-axiom, we can define an application operation such that  the \axj-axiom is valid.

\begin{defn}[Application]\label{application} We introduce a new abbreviation $\cdot$ on terms by:
\[s\cdot t:=s+t+\cstar\]
\end{defn}

\begin{lem}[The ``$\textbf{j}$-axiom'' follows]\label{cstar-j-follows} For all $\model=(W, W_0, V, E)$, $\omega\in W_0$, $A, B\in\LJ$ and $s,t\in\Tm$:
\[\model,\omega\Vdash s:(A\to B)\to(t:A\to s\cdot t:B)\]
\end{lem}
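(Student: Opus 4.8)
The plan is to unfold the abbreviation $s \cdot t := s + t + \cstar$ (which, by left-associativity of $+$, is $(s+t)+\cstar$) and then chase the subset conditions on $E$. Since $\omega \in W_0$, the Boolean connectives behave classically there, so it suffices to assume $\mathcal{M},\omega \Vdash s:(A\to B)$ and $\mathcal{M},\omega \Vdash t:A$ and derive $\mathcal{M},\omega \Vdash s\cdot t:B$.

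From the two assumptions and the condition on $V$ for justification formulas at worlds in $W_0$ (\cref{cstar-subset models}), I get $E(\omega, s) \subseteq [A\to B]$ and $E(\omega, t) \subseteq [A]$. Now I apply the subset conditions on $E$: first $E(\omega, (s+t)+\cstar) \subseteq E(\omega, s+t) \cap E(\omega, \cstar)$, then $E(\omega, s+t) \subseteq E(\omega, s) \cap E(\omega, t)$, and finally $E(\omega, \cstar) \subseteq \WMP$. Chaining these inclusions gives
\[
E(\omega, s\cdot t) \;\subseteq\; [A\to B] \cap [A] \cap \WMP.
\]

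Next I pick an arbitrary $\upsilon \in E(\omega, s\cdot t)$. By the displayed inclusion, $V(\upsilon, A) = 1$, $V(\upsilon, A\to B) = 1$, and $\upsilon \in \WMP$, so the valuation at $\upsilon$ is closed under modus ponens; hence $V(\upsilon, B) = 1$, i.e.\ $\upsilon \in [B]$. This shows $E(\omega, s\cdot t) \subseteq [B]$, so by the $V$-condition at $\omega \in W_0$ we get $V(\omega, s\cdot t:B) = 1$, that is $\mathcal{M},\omega \Vdash s\cdot t:B$, and thus the whole implication holds at $\omega$ by classicality of $W_0$.

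I do not anticipate a real obstacle here; the only points requiring care are the left-associative parsing of $s+t+\cstar$ as $(s+t)+\cstar$ (so that the binary $+$ condition applies twice) and the use of $E(\omega,\cstar) \subseteq \WMP$ to license the modus ponens step inside each $\upsilon \in E(\omega, s\cdot t)$.
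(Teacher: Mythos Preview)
Your proof is correct and follows essentially the same route as the paper: assume the antecedents, translate them into the inclusions $E(\omega,s)\subseteq[A\to B]$ and $E(\omega,t)\subseteq[A]$, use the $+$-condition (twice) together with $E(\omega,\cstar)\subseteq\WMP$ to push $E(\omega,s\cdot t)$ into $[A\to B]\cap[A]\cap\WMP$, and then apply closure under modus ponens pointwise. If anything, you are slightly more careful than the paper, which writes an equality $E(\omega,s+t+\cstar)=E(\omega,s)\cap E(\omega,t)\cap E(\omega,\cstar)$ where the model conditions only guarantee~$\subseteq$; your version with inclusions is the precise one.
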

\begin{proof} Assume $\model,\omega\Vdash s:(A\to B)$ and $\model, \omega\Vdash t:A$. Thus $E(\omega, s)\subseteq[A\to B]$ and $E(\omega, t)\subseteq[A]$. We find
\begin{multline*}
E(\omega, s\cdot t)= E(\omega, s+t+\cstar)=\\E(\omega, s)\cap E(\omega, t)\cap E(\omega,\cstar)\subseteq[A\to B]\cap[A]\cap E(\omega,\cstar). 
\end{multline*}
Hence for all $\upsilon\in E(\omega, s\cdot t)$ we have $V(\upsilon, A\to B)=1$ and $V(\upsilon, A)=1$ and $\upsilon\in E(\omega,\cstar)$ and therefore $V(\upsilon, B)=1$. Hence $E(\omega, s\cdot t)\subseteq[B]$ and we obtain $\model,\omega\Vdash s\cdot t:B$.
\end{proof}

Of course there is as well a derivation within any of the presented logics.
We use CR as an abbreviation for classical reasoning.
\begin{align*}
	&s:(A\to B)\to s+t:(A\to B)&\axjplus\\ 
	&s+t:(A\to B)\to s+t+\cstar:(A\to B)&\axjplus\\
	&s:(A\to B)\to s+t+\cstar:(A\to B)&\text{CR }\\
	&t:A\to s+t:A&\axjplus\\
	&s+t:A\to s+t+\cstar:A&\axjplus\\
	&t:A\to s+t+\cstar:A&\text{CR}\\
	&s+t+\cstar:(A\to B)\to(s+t+\cstar:A\to s+t+\cstar:B)&\axjcstar\\
	&s:(A\to B)\to(t:A\to s+t+\cstar:B)&\text{CR}
\end{align*}
%


\subsection{Completeness}
To prove completeness we will construct a canonical model and then show that for every formula $F$ that is not derivable in $\LJCS$, there is a model $\model^C$ with a world $\Gamma\in \WnullC$ s.t.~$\model^C,\Gamma\Vdash \neg F$. 
Like in the case of other semantics for justification logics, the completeness of logics containing ($\axjd$) is only given, if the corresponding constant specification is axiomatically appropriate. 
Before we start with the definition of the canonical model, we must do some preliminary work. We will first prove that our logics are conservative extensions of classical logic. With this result we can argue, that the empty set is consistent and hence can be extended to so-called maximal $\LJCS$-consistent sets of formulas. These sets will be used to build the $W_0$-worlds in the canonical model.
\begin{thm}[Conservativity]\label{cstar-conservativity} All  logics $\LJstar$ presented are conservative extensions of the classical logic $\CL$, i.e.~for any formula $F\in\Lcp$:
\[\LJstar\vdash F\quad\Leftrightarrow\quad\CL\vdash F\]
\end{thm}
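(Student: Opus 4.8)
The plan is to split the biconditional. The direction $\CL \vdash F \Rightarrow \LJstar \vdash F$ is immediate: the axiom group $\axcl$ makes every classical propositional axiom available in $\LJstar$, and $\LJstar$ has modus ponens as a rule, so any Hilbert proof in $\CL$ is already a proof in $\LJstar$. All the work is in the converse, restricted to $\:$-free formulas.

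For $\LJstar \vdash F \Rightarrow \CL \vdash F$ with $F\in\Lcp$, I would introduce a forgetful projection $(\cdot)^{\circ}\colon \LJ \to \Lcp$ defined by recursion on formulas: $p^{\circ}:=p$ for $p\in\Prop$, $\perp^{\circ}:=\perp$, $(G\to H)^{\circ}:=G^{\circ}\to H^{\circ}$, and $(t{:}G)^{\circ}:=G^{\circ}$. (The more common projection sending $t{:}G$ to $\top$ would work for most of our axioms but not for $\axjd$, whose image $\top\to\perp$ is not a tautology; collapsing $t{:}G$ to $G^{\circ}$ repairs this while, as noted below, turning $\axjcstar$ into a genuine modus-ponens instance.) The key lemma is: for every $G\in\LJ$, if $\LJstar\vdash G$ then $\CL\vdash G^{\circ}$, proved by induction on the length of the derivation of $G$.

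For the base cases one checks, uniformly over instances, that $A^{\circ}$ is a classical tautology for each axiom $A$. Since $(\cdot)^{\circ}$ commutes with $\to$, an instance of an $\axcl$-schema is mapped to a substitution instance of the same classical schema, hence to a tautology; $\axjplus$ maps to $A^{\circ}\lor A^{\circ}\to A^{\circ}$; $\axjcstar$ maps to $A^{\circ}\land (A^{\circ}\to B^{\circ})\to B^{\circ}$, an instance of modus ponens; $\axjfour$ and $\axjt$ map to $A^{\circ}\to A^{\circ}$; and $\axjd$ maps to $\perp\to\perp$. For the induction step, the modus ponens rule is preserved because $(\cdot)^{\circ}$ commutes with $\to$. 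For an application of (AN!), the conclusion has principal connective $\:$ and is an iterated $\:$-chain of $!$-prefixed copies of $c$ ending in $c{:}A$ with $(c,A)\in\CS$; unfolding the definition of $(\cdot)^{\circ}$ repeatedly on the outermost $\:$ collapses the whole formula to $A^{\circ}$, and $A$ is one of the axioms, so $\CL\vdash A^{\circ}$ by the base-case analysis. This proves the lemma. Finally, a trivial induction on $\Lcp$-formulas shows $F^{\circ}=F$ (such $F$ has no subformula $t{:}G$), so $\LJstar\vdash F$ yields $\CL\vdash F^{\circ}=F$.

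There is no substantial obstacle here; the only point that needs a moment's thought is choosing the projection so that it validates $\axjd$ (hence sending $t{:}G$ to $G^{\circ}$ rather than to $\top$), while simultaneously checking that under this choice $\axjcstar$ and the (AN!) rule still go through, which they do.
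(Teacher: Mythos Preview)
Your proposal is correct and follows essentially the same approach as the paper: the paper defines the identical translation (called $\mathfrak{t}$ there, with $\mathfrak{t}(s{:}A):=\mathfrak{t}(A)$) and proves the same lemma by the same induction on derivation length, checking each axiom schema and each rule. Your additional remark about why the alternative projection $t{:}G\mapsto\top$ fails for $\axjd$ is a nice observation that the paper does not make explicit, but otherwise the arguments coincide.
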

\begin{proof}
	Since $\LJstar$ is an extension of $\CL$ the right-to-left direction is obvious.\\
	To prove the direction from  left to right we use a translation $\mathfrak{t}:\LJ\to\Lcp$:
	\begin{align*}
		\mathfrak{t}(P)&:=P\\
		\mathfrak{t}(\perp)&:=\perp\\
		\mathfrak{t}(A\to B)&:=\mathfrak{t}(A)\to \mathfrak{t}(B)\\
		\mathfrak{t}(s:A)&:=\mathfrak{t}(A)
	\end{align*}
	This translation removes all justification terms from a given formula. 
	Now we show by induction on the length of the derivation for some formula $A$ that $\CL\vdash \mathfrak{t}(A)$ whenever $\LJstar\vdash A$ and note that $\mathfrak{t}(A)=A$ for any $A\in\Lcp$. The cases where $A$ is an axiom of $\CL$ is then obvious, since all logics $\LJstar$ contain all axioms of $\CL$.
	\begin{itemize}
		\item \axcl: If $A$ is an instance of some axiom scheme in $\LJ$, then $\mathfrak{t}(A)=A$ is an instance of the same axiom scheme in $\CL$.
		\item \axjplus: $\mathfrak{t}(s:A\lor t:A\to (s+t):A)=A\lor A\to A$, which is a classical tautology. 
%
		\item \axjcstar: $\mathfrak{t}(c:A\land c:(A\to B)\to c:B)=A\land (A\to B)\to B$, which is a classical tautology.
		\item \axjfour,\axjd,\axjt: All translations have the form $A\to A$, which is a classical tautology. 
		\item \textbf{modus ponens}: If $A$ is derived by modus ponens, then there is a formula $B$ s.t.~$\LJstar\vdash B\to A$ and $\LJstar\vdash B$ and by induction hypothesis \[\Lcp\vdash \mathfrak{t}(B)\to \mathfrak{t}(A)\] and $\Lcp\vdash  \mathfrak{t}(B)$ and hence $ \mathfrak{t}(A)$ can be derived in $\CL$ by modus ponens.
		\item \textbf{axiom necessitation}: If $A$ is derived by axiom necessitation, then $A$ is of the form $c:B$ for some axiom $B$. But $\mathfrak{t}(c:B)=B$ and $B$ is an axiom.
\qedhere
	\end{itemize}
\end{proof}
\begin{defn}[Consistency]\label{cstar-Consistency}
	A logical theory $\Logic$ is called consistent, if 
	$\Logic\not\vdash\perp$.
	A set of formulas $\Gamma\subset\LJ$ is called $\Logic$-consistent if $\Logic\not\vdash\bigwedge\Sigma\to\perp$ for every finite $\Sigma\subseteq\Gamma$.
	A set of formulas $\Gamma$ is called maximal $\Logic$-consistent, if it is $\Logic$-consistent and none of its proper supersets is.
\end{defn}

Since all presented logics are conservative extensions of $\CL$ , we have the following consistency result.
\begin{lem}[Consistency of the logics]\label{cstar-L_are-consistent}
	All presented logics are consistent.
\end{lem}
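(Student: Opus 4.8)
The plan is to derive the consistency of each presented logic directly from the conservativity theorem (Theorem~\ref{cstar-conservativity}) together with the consistency of classical propositional logic. The key observation is that consistency, as defined in Definition~\ref{cstar-Consistency}, means $\Logic\not\vdash\perp$, and since $\perp\in\Lcp$, conservativity lets us transfer this question down to $\CL$.

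Concretely, I would argue by contraposition. Suppose some presented logic $\LJstar$ were inconsistent, i.e.\ $\LJstar\vdash\perp$. Since $\perp$ is a formula of $\Lcp$ (it is one of the primitive symbols of the classical language), Theorem~\ref{cstar-conservativity} applies and gives $\CL\vdash\perp$. But classical propositional logic is consistent (this is the standard soundness argument: every theorem of $\CL$ is a tautology under the usual two-valued semantics, and $\perp$ is not a tautology, being false under every valuation). This contradiction shows $\LJstar\not\vdash\perp$, i.e.\ $\LJstar$ is consistent. The same reasoning applies uniformly to every logic in the family $\LJstar$, since Theorem~\ref{cstar-conservativity} was stated for all of them.

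I should also note that the statement concerns the logics $\LJCS$ with an arbitrary constant specification $\CS$, not merely the axiom sets $\LJstar$. But adding the rule (AN!) for a constant specification only introduces theorems of the form $!\ldots!c:\ldots:c:A$, which the translation $\mathfrak{t}$ from the proof of Theorem~\ref{cstar-conservativity} sends to the axiom $A$; indeed that proof already handles the axiom necessitation rule. So conservativity, and hence consistency, holds for $\LJCS$ regardless of $\CS$, and the argument above goes through verbatim with $\LJstar$ replaced by $\LJCS$.

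The main obstacle here is essentially nil: this is a short corollary, and the only thing to be careful about is making explicit that $\perp$ lies in the fragment $\Lcp$ on which conservativity was proved, and that the consistency of $\CL$ itself is being invoked as a known fact. No induction or model construction is needed at this stage; the real work was already done in establishing conservativity.
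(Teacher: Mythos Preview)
Your proposal is correct and follows exactly the approach the paper intends: the paper simply states that consistency follows because all presented logics are conservative extensions of $\CL$, and your argument spells out precisely why (if $\LJstar\vdash\perp$ then by conservativity $\CL\vdash\perp$, contradicting the consistency of classical logic). Your additional remark about constant specifications is a reasonable clarification, since the conservativity proof in the paper already handles axiom necessitation.
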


	As usual, we  have a Lindenbaum lemma and the usual properties of maximal consistent sets hold, see, e.g.,~\cite{justificationLogic}. 
\begin{lem}[Lindenbaum Lemma]\label{Lindenbaum} Given some logic $\Logic$, then for each $\Logic$-consistent set of formulas $\Gamma\subset\LJ$ there exists a maximal consistent set $\Gamma'$ such that $\Gamma\subseteq\Gamma'$.
\end{lem}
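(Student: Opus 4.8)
The plan is to follow the standard Lindenbaum construction adapted to our language. First I would fix an enumeration $F_0, F_1, F_2, \dots$ of all formulas in $\LJ$, which is possible since the set of terms and the set of atomic propositions are both countable, hence $\LJ$ is countable. Starting from the given $\Logic$-consistent set $\Gamma$, I would define an increasing chain of $\Logic$-consistent sets $\Gamma = \Gamma_0 \subseteq \Gamma_1 \subseteq \Gamma_2 \subseteq \dots$ by setting $\Gamma_{n+1} := \Gamma_n \cup \{F_n\}$ if this set is $\Logic$-consistent, and $\Gamma_{n+1} := \Gamma_n$ otherwise. Finally I would let $\Gamma' := \bigcup_{n \in \mathbb{N}} \Gamma_n$.

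Then I would verify the two required properties. For $\Gamma \subseteq \Gamma'$ this is immediate since $\Gamma = \Gamma_0 \subseteq \Gamma'$. For $\Logic$-consistency of $\Gamma'$, I would argue by contradiction: if $\Gamma'$ were inconsistent, then by \cref{cstar-Consistency} there is a finite $\Sigma \subseteq \Gamma'$ with $\Logic \vdash \bigwedge \Sigma \to \perp$; but every element of $\Sigma$ appears in some $\Gamma_n$, and since the chain is increasing, all of $\Sigma$ lies in a single $\Gamma_N$ for $N$ large enough, contradicting the $\Logic$-consistency of $\Gamma_N$, which holds by an easy induction on $n$ (the base case is the hypothesis on $\Gamma$, and the inductive step is exactly the case distinction in the construction). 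For maximality, I would show that no proper superset of $\Gamma'$ is $\Logic$-consistent: if $F \notin \Gamma'$, then in particular $F = F_n$ for some $n$ and $F_n$ was not added at stage $n+1$, which means $\Gamma_n \cup \{F_n\}$ is $\Logic$-inconsistent; since $\Gamma_n \subseteq \Gamma'$, monotonicity of derivability gives that $\Gamma' \cup \{F\}$ is $\Logic$-inconsistent as well, so $\Gamma'$ is maximal.

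I do not expect a serious obstacle here; this is the textbook argument, and the excerpt itself signals that it is routine (''As usual, we have a Lindenbaum lemma and the usual properties of maximal consistent sets hold''). The only point requiring a small amount of care is the compactness-style observation that a derivation of $\bigwedge \Sigma \to \perp$ uses only finitely many premises from $\Gamma'$, so that the inconsistency is already witnessed at a finite stage; this is precisely the reason the notion of $\Logic$-consistency in \cref{cstar-Consistency} is phrased in terms of finite subsets. One could also simply cite the standard reference, as the authors do, but writing out the construction keeps the paper self-contained.
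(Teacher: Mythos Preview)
Your proposal is correct and is precisely the standard Lindenbaum construction. The paper itself does not give a proof of this lemma at all; it simply remarks that ``as usual, we have a Lindenbaum lemma'' and defers to~\cite{justificationLogic}, so there is nothing to compare against beyond noting that your written-out argument supplies what the paper leaves to citation.
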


\begin{lem}[Properties of maximal consistent sets]\label{cstar-maxConsist}
	Given some logic $\Logic$ and its language $\LJ$. If $\Gamma$ is a maximal $\Logic$-consistent set, then for all $F,G\in\LJ$:
	\begin{enumerate}
		\item if $\Logic\vdash F$, then $F\in\Gamma$;
		\item $F\in\Gamma$ if and only if $\neg F\not\in\Gamma$;
		\item $F\to G\in\Gamma$ if and only if $F\not\in\Gamma$ or $G\in\Gamma$;
		\item $F\in\Gamma$ and $F\to G\in\Gamma$ imply $G\in\Gamma$.
	\end{enumerate}
\end{lem}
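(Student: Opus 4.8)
The statement to prove is \Cref{cstar-maxConsist}, the standard package of properties of maximal consistent sets. Here is how I would proceed.

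\textbf{Overall approach.} All four properties are proved by the familiar syntactic arguments used for maximal consistent sets in any Hilbert-style system over classical propositional logic. The only facts I need about $\Logic$ are that it contains all classical propositional axioms (via \axcl), is closed under modus ponens, and — for the very first item — that $\Logic$ is consistent in the sense of \Cref{cstar-Consistency}; none of the justification-specific axioms or rules play any role. So the proof is uniform over the whole family $\LJstar$.

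\textbf{Key steps, in order.} First I would record two auxiliary observations. (a) \emph{Deductive closure under the logic}: if $\Gamma$ is maximal $\Logic$-consistent and $\Logic \vdash F$, then $\Gamma \cup \{F\}$ is still $\Logic$-consistent — because for any finite $\Sigma \subseteq \Gamma$, a derivation of $\bigwedge(\Sigma \cup \{F\}) \to \perp$ together with $\Logic\vdash F$ yields, by classical reasoning and (MP), a derivation of $\bigwedge\Sigma \to \perp$, contradicting consistency of $\Gamma$. By maximality $F \in \Gamma$; this is item~(1). (b) \emph{Consistency is detected by a single implication}: $\Gamma \cup \{F\}$ is $\Logic$-inconsistent iff $\Logic \vdash \bigwedge\Sigma \to \neg F$ for some finite $\Sigma \subseteq \Gamma$, i.e.\ (folding $\Sigma$ into $\Gamma$'s membership) iff $\neg F$ is ``derivable from $\Gamma$''. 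With (a) and (b) in hand: for item~(2), if $F \notin \Gamma$ then by maximality $\Gamma \cup \{F\}$ is inconsistent, so $\Gamma$ proves $\neg F$ and hence (again extending and using maximality, or directly) $\neg F \in \Gamma$; conversely $F$ and $\neg F$ cannot both lie in $\Gamma$ since $\{F, \neg F\}$ is a finite inconsistent subset. Item~(3) follows from item~(2) plus classical propositional reasoning: if $F \notin \Gamma$ then $\neg F \in \Gamma$, and since $\Logic \vdash \neg F \to (F \to G)$, item~(1)-style closure gives $F \to G \in \Gamma$; if $G \in \Gamma$, then from $\Logic \vdash G \to (F \to G)$ we likewise get $F \to G \in \Gamma$; for the converse, if $F \to G \in \Gamma$ and $F \in \Gamma$ but $G \notin \Gamma$, then $\neg G \in \Gamma$ by~(2) and $\{F, F\to G, \neg G\}$ is a finite $\Logic$-inconsistent subset, contradiction. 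Item~(4) is the special case of the forward direction of~(3): from $F \in \Gamma$ and $F \to G \in \Gamma$, were $G \notin \Gamma$ we would have $\neg G \in \Gamma$ and the finite subset $\{F, F \to G, \neg G\}$ proves $\perp$.

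\textbf{Main obstacle.} There is no real obstacle here — this is entirely routine. The one point deserving a little care is keeping the bookkeeping straight between ``$\Gamma$ is $\Logic$-consistent'' (a statement about all finite subsets, per \Cref{cstar-Consistency}) and the single-formula manipulations above: each time I add or remove a formula I should make explicit that a finite inconsistent subset of the enlarged set induces, via a classical propositional derivation and (MP), a finite inconsistent subset (or a derivation of $\perp$) witnessing the contradiction. Once the ``extend-then-use-maximality'' pattern from observation~(a) is stated once, the remaining items are immediate instances of it combined with the classical tautologies $\neg F \to (F \to G)$, $G \to (F \to G)$, and $F \to ((F \to G) \to G)$, all available through \axcl. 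I would therefore present (1) in full and then treat (2)–(4) briefly, citing the standard development (e.g.\ \cite{justificationLogic}) for the omitted routine verifications.
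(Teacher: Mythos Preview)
Your proposal is correct and follows the standard argument for maximal consistent sets. Note that the paper does not actually give a proof of this lemma at all: it simply states the result and defers to~\cite{justificationLogic}, so your write-up supplies precisely the routine verification the authors chose to omit.
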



\begin{defn}[Canonical Model]\label{cstar-canonical_model} For a given logic $\LJCS$ 
	we define the canonical model $\model^C=(W^C, \WnullC, V^C, E^C)$ by:
\begin{itemize}
\item $W^C=\mathcal{P}(\LJ)$.
\item $\WnullC=\Set{\Gamma\in W^C| \Gamma\text{ is maximal }\LJCS-\text{consistent set of formulas}}$.
\item $V^C: V^C(\Gamma, F)=1\quad\text{ iff }\quad F\in\Gamma$;
\item $E^C:$ With $\Gamma/t:=\{F\in\LJ\enspace|\enspace t:F\in\Gamma\}$ and
 \begin{align*}
 \WMPC:=\Set{\Gamma\in W^C| \forall A,B\in\LJ: \text{ if }A\to B\in\Gamma\text{ and }A\in\Gamma \text{ then }B\in\Gamma}
\end{align*}
we define :
\begin{align*}
E^C(\Gamma, t)&=\Set{\Delta\in \WMPC|\Delta\supseteq \Gamma/t} \text{ if } t \text{ is a }\cstarterm\\
E^C(\Gamma,t)&=\Set{\Delta\in W^C|\Delta\supseteq\Gamma/t}\text{ otherwise}.
\end{align*}
\end{itemize}
\end{defn}
Now we must show that the  canonical model is indeed an $\LJCS$-subset model.
\begin{lem}
	\label{cstar-cm_is_ssn} 
	The canonical model $\mathcal{M}^C$ is an $\LJCS$-subset model
	if either 
	\begin{enumerate}
		\item $(\axjd)\notin\LJCS$ or
		\item the constant specification $\CS$ is axiomatically appropriate or $(\axjt)\in\LJCS$.
	\end{enumerate}
\end{lem}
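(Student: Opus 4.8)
The plan is to verify each defining clause of Definition~\ref{cstar-subset models} for the canonical model $\model^C=(W^C,\WnullC,V^C,E^C)$, working through the conditions on $V^C$ first and then the conditions on $E^C$. For $V^C$: given $\Gamma\in\WnullC$, the clause $V^C(\Gamma,\perp)=0$ is immediate from consistency (Lemma~\ref{cstar-maxConsist}(2)), and the clause for $F\to G$ is exactly Lemma~\ref{cstar-maxConsist}(3) rewritten via $V^C(\Gamma,H)=1\Leftrightarrow H\in\Gamma$. The clause $V^C(\Gamma,t:F)=1\Leftrightarrow E^C(\Gamma,t)\subseteq[F]$ is the real content here: unfolding, $[F]=\{\Delta\mid F\in\Delta\}$, and $E^C(\Gamma,t)$ consists of all $\Delta$ (in $W^C$, or in $\WMPC$ if $t$ is a $\cstarterm$) with $\Delta\supseteq\Gamma/t$. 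The left-to-right direction is easy: if $t:F\in\Gamma$ then $F\in\Gamma/t$, so every $\Delta\supseteq\Gamma/t$ contains $F$. For right-to-left one argues contrapositively: if $t:F\notin\Gamma$, one must produce some $\Delta\in E^C(\Gamma,t)$ with $F\notin\Delta$; taking $\Delta=\LJ\setminus\{F\}$ works when $t$ is not a $\cstarterm$ (it trivially contains $\Gamma/t$ since $F\notin\Gamma/t$), and when $t$ is a $\cstarterm$ one needs a $\Delta\in\WMPC$ with $\Gamma/t\subseteq\Delta$ and $F\notin\Delta$ — here I would take $\Delta=\{G\mid \Gamma/t\vdash_{\CL^{+}} G\}$, i.e. the deductive closure of $\Gamma/t$ under classical reasoning, which lies in $\WMPC$, contains $\Gamma/t$, and omits $F$ because $\Gamma/t\not\vdash F$ (this last fact following from $t:F\notin\Gamma$ together with the $\axjcstar$ axiom and axiom necessitation applied inside $\Gamma$).

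For the conditions on $E^C$, fix $\Gamma\in\WnullC$. The condition $E^C(\Gamma,s+t)\subseteq E^C(\Gamma,s)\cap E^C(\Gamma,t)$ follows since $\Gamma/(s+t)\supseteq\Gamma/s$ and $\Gamma/(s+t)\supseteq\Gamma/t$ by the $\axjplus$ axiom and Lemma~\ref{cstar-maxConsist}(4), so any $\Delta\supseteq\Gamma/(s+t)$ also extends $\Gamma/s$ and $\Gamma/t$ (and membership in $\WMPC$ transfers appropriately since $s+t$ is a $\cstarterm$ whenever $s$ or $t$ is). The condition $E^C(\Gamma,\cstar)\subseteq\WMPC$ is immediate from the definition, since $\cstar$ is a $\cstarterm$. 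The $\axjt$ condition $\Gamma\in E^C(\Gamma,t)$ requires $\Gamma\supseteq\Gamma/t$, which says $t:F\in\Gamma\Rightarrow F\in\Gamma$ — exactly the $\axjt$ axiom; one also checks $\Gamma\in\WMPC$ when $t$ is a $\cstarterm$, which holds since $\Gamma$ is maximal consistent hence modus-ponens closed. The $\axjfour$ condition: if $\Delta\in E^C(\Gamma,!t)$, i.e. $\Delta\supseteq\Gamma/{!t}$, and $V^C(\Gamma,t:F)=1$, i.e. $t:F\in\Gamma$, then $!t:(t:F)\in\Gamma$ by $\axjfour$, so $t:F\in\Gamma/{!t}\subseteq\Delta$, giving $V^C(\Delta,t:F)=1$. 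The constant-specification conditions $E^C(\Gamma,c)\subseteq[A]$ and the iterated-bang versions follow because $(c,A)\in\CS$ forces $c:A\in\Gamma$ (and similarly $\underbrace{!...!}_{n}c:\underbrace{!...!}_{n-1}c\!:\dots:c:A\in\Gamma$) via axiom necessitation plus Lemma~\ref{cstar-maxConsist}(1), so $A\in\Gamma/c$ (resp. the iterated formula lies in $\Gamma/\underbrace{!...!}_{n}c$) and hence in every $\Delta$ in the respective $E^C$-set.

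The main obstacle is the $\axjd$ condition: when $\axjd\in\LJCS$ we must exhibit, for each $\Gamma\in\WnullC$ and each $t\in\Tm$, some $\upsilon\in\WnullC$ with $\upsilon\in E^C(\Gamma,t)$, i.e. a \emph{maximal $\LJCS$-consistent} $\Delta\supseteq\Gamma/t$ (and when $t$ is a $\cstarterm$, this $\Delta$ automatically lies in $\WMPC$). By the Lindenbaum Lemma it suffices to show $\Gamma/t$ is $\LJCS$-consistent. This is precisely where the two alternative hypotheses of the lemma enter. If $(\axjt)\in\LJCS$ then $\Gamma/t\subseteq\Gamma$ by the $\axjt$ argument above, so $\Gamma/t$ is consistent because $\Gamma$ is. If instead $\CS$ is axiomatically appropriate, one argues that an inconsistency $\Gamma/t\vdash\perp$ would, via a finite subset $\Sigma\subseteq\Gamma/t$, lift to $t:\perp\in\Gamma$: from $t:G$ for each $G\in\Sigma$ one obtains $t:\bigwedge\Sigma$ (using $\axjplus$-style reasoning to combine, together with a constant justifying the classical derivation of $\bigwedge\Sigma\to\perp$ — this is where axiomatic appropriateness is needed — and then $\axjcstar$ or the derived application to detach $t:\perp$), contradicting $\axjd$, which gives $t:\perp\to\perp\in\Gamma$ and hence $\perp\in\Gamma$. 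I would carry out this internalization argument carefully — tracking how a classical proof of $\bigwedge\Sigma\to\perp$ gets a constant justification via (AN!) and is then combined with the $t:\sigma_i$'s under the term $t$ (padded to a $\cstarterm$ if necessary) using $\axjcstar$ — since it is the one nonroutine step; the rest is bookkeeping with Lemmas~\ref{Lindenbaum} and~\ref{cstar-maxConsist}.
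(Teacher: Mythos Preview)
Your overall plan matches the paper's: verify each clause of Definition~\ref{cstar-subset models} for $\model^C$, with the $\axjd$ case relying on either $\axjt$ or axiomatic appropriateness (via internalization) to show $\Gamma/t$ is consistent. Most of your steps are fine and essentially identical to the paper's.

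There is, however, a real gap in your treatment of the right-to-left direction of the clause $V^C(\Gamma,t{:}F)=1\Leftrightarrow E^C(\Gamma,t)\subseteq[F]$ when $t$ is a $\cstarterm$. You propose $\Delta=\{G\mid \Gamma/t\vdash G\}$, the full deductive closure of $\Gamma/t$, and claim $F\notin\Delta$ because ``$\Gamma/t\not\vdash F$''. But that implication does not follow from $t{:}F\notin\Gamma$ in general: the axiom $\axjcstar$ only makes $\Gamma/t$ closed under \emph{modus ponens}, not under full classical deduction. For instance, with an empty $\CS$ one can have a maximal consistent $\Gamma$ with $\cstar{:}P\in\Gamma$ but $\cstar{:}(P\lor Q)\notin\Gamma$; then $P\in\Gamma/\cstar$ and $P\lor Q\notin\Gamma/\cstar$, yet $P\lor Q$ lies in the deductive closure of $\Gamma/\cstar$, so your $\Delta$ contains $F=P\lor Q$ and fails to witness $E^C(\Gamma,\cstar)\not\subseteq[F]$. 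The internalization idea you gesture at cannot repair this: even with an axiomatically appropriate $\CS$, lifting a proof of $F$ from $\Gamma/t$ yields $s{:}F\in\Gamma$ for some compound term $s$ built from $t$ and constants, not $t{:}F\in\Gamma$.

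The paper's fix is much simpler: take $\Delta:=\Gamma/t$ itself. Since $t$ is a $\cstarterm$ and $\Gamma$ is maximal consistent, $\axjcstar$ gives that whenever $A,A\to B\in\Gamma/t$ then $B\in\Gamma/t$; hence $\Gamma/t\in\WMPC$ already, and trivially $\Gamma/t\supseteq\Gamma/t$ and $F\notin\Gamma/t$. No deductive closure and no appeal to axiom necessitation is needed for this clause. (Your choice $\Delta=\LJ\setminus\{F\}$ for the non-$\cstarterm$ case is fine, though the paper uniformly uses $\Delta=\Gamma/t$ there as well.)

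A smaller point: in the $\axjd$ step you write ``from $t{:}G$ for each $G\in\Sigma$ one obtains $t{:}\bigwedge\Sigma$'' --- that is not right, and $\axjplus$ does not combine justifications into conjunctions. The correct move (which the paper uses and which you partly indicate afterwards) is the standard internalization: from $A_1,\dots,A_n\vdash\perp$ and axiomatic appropriateness one gets $t{:}A_1,\dots,t{:}A_n\vdash s(t){:}\perp$ for some term $s(t)$ built from $t$ and constants, and then $\axjd$ applied to $s(t)$ yields the contradiction. The resulting justification is $s(t)$, not $t$.
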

\begin{proof} In order to prove this, we have to show that $\model^C$ meets all the conditions we made for the valuation and evidence function and the constant specification i.e.:
\begin{enumerate}
\item\label{cstar-w0_notEmpty} $\WnullC\neq\emptyset$.
\item\label{cstar-\WnullC} For all $\Gamma\in \WnullC$:
	\begin{enumerate}
	\item\label{cstar-not_perp} $V^C(\Gamma, \perp)=0$;
	\item\label{cstar-implication} $V^C(\Gamma, F\to G)=1\quad\text{ iff }\quad V^C(\Gamma,F)=0$ or $V^C(\Gamma, G)=1$;
	\item\label{cstar-justification} $V^C(\Gamma, t:F)=1\quad\text{ iff }\quad E^C(\Gamma, t)\subseteq[F]$.
	\end{enumerate}

\item\label{E} For all $\Gamma\in \WnullC, F\in\LJ,  s, t\in\Tm$:
	\begin{enumerate}
	\item $E^C(\Gamma, s+t)\subseteq E^C(\Gamma, s)\cap E^C(\Gamma, t)$;
	\item $E^C(\Gamma, \cstar)\subseteq \WMPC$;
	\item\label{cstar-jd} if \axjd~in $\LJstar$: $\forall \Gamma\in \WnullC$ and $\forall t\in\Tm: \exists\Delta\in \WnullC$ s.t.~$\Delta\in E^C(\Gamma, t)$;
	\item\label{cstar-jt} if \axjt~in $\LJstar$: $\forall \Gamma\in \WnullC$ and $\forall t\in\Tm: \Gamma\in E^C(\Gamma, t)$;
	\item\label{cstar-j4} if \axjfour~in $\LJstar$: 
	\begin{multline*}
	\hspace{-10pt}E^C(\Gamma, !t)\subseteq \\ \Set{\Delta\in W^C | \forall F\in\LJ(V^C(\Gamma, t:F)\Rightarrow V^C(\Delta, t:F)=1)};
	\end{multline*}
	\item\label{cstar-CS} for all $(c, A)\in\CS$: $E^C(\Gamma, c)\subseteq[A]$  and \[E^C(\Gamma, \underbrace{!...!}_{n}c)\subseteq[\underbrace{!...!}_{n-1}c:....!c:c:A]\text{ for all }n\in\mathbb{N}.\]
	\end{enumerate}
\end{enumerate}
So the proofs are here:
\begin{enumerate}
\item Since the empty set is proven to be $\LJCS$-consistent (see  Lemma \ref{cstar-L_are-consistent}) it can be extended by the Lindenbaum Lemma to a maximal $\LJCS$-consistent set of formulas $\Gamma$ with $\Gamma\in \WnullC$.
\item Suppose $\Gamma\in W^C_0$:
	\begin{enumerate}
	\item We claim $V^C(\Gamma, \perp)=0$: Suppose the opposite, then $V^C(\Gamma,\perp)=1$ hence by the definition of $V^C$ follows that $\perp\in \Gamma$. But this is a contradiction to the fact that $\Gamma$ is consistent.
	\item From left to right: Suppose $V^C(\Gamma, F\to G)=1$, then by the definition of $V^C, F\to G\in \Gamma$. Since $\Gamma$ is maximal $\LJCS$-consistent this implies by  Lemma \ref{cstar-maxConsist} (3) that $F\not\in\Gamma$ or $G\in\Gamma$. Hence again by the definition of $V^C, V^C(\Gamma, F)=0$ or $V^C(\Gamma, G)=1$.\\
	From right to left: Suppose $V^C(\Gamma, F)=0$ or $V^C(\Gamma, G)=1$, then by the definition of $V^C$ either $F\not\in\Gamma$ or $G\in\Gamma$. Since $\Gamma\in \WnullC, \Gamma$ is maximal $\LJstar$-consistent and hence in both cases by Lemma \ref{cstar-maxConsist} (3) $F\to G\in\Gamma$. But this means again by the definition of $V^C$ that $V(\Gamma, F\to G)=1$.
	\item From left to right: Suppose $V^C(\Gamma, t:F)=1$, then by Definition \ref{cstar-canonical_model} \ $t:F\in\Gamma$. Hence with the definition of $\Gamma/t$ we obtain $F\in\Gamma/t$. So for each $\Delta\in E^C(\Gamma, t), F\in\Delta$ (again by Definition \ref{cstar-canonical_model}). Hence for these $\Delta$ it follows by the definition of $V^C$ that $V^C(\Delta, F)=1$ and therefore $\Delta\in[F]$. Since this is true for all $\Delta\in E^C(\Gamma, t)$ we obtain $E^C(\Gamma, t)\subseteq[F]$.\\
	From right to left: The proof is by contraposition. \\Suppose $V^C(\Gamma, t:F)\neq 1$, then by the definition of $V^C\quad t:F\not\in\Gamma$. We define a world $\Delta$ by $\Delta:=\Gamma/t$. Since $\Delta\in\powerset(\LJ)$ we can be sure that $\Delta$ exists, i.e. $\Delta\in W$. Since $t:F\not\in\Gamma$ it follows that $F\not\in\Gamma/t$ and therefore $F\not\in\Delta$. But obviously $\Delta\supseteq\Gamma/t$ hence $\Delta\in E^C(\Gamma, t)$. So we conclude $E^C(\Gamma, t)\not\subseteq[F]$.\\
	It remains to show that in case $t$ is a $\cstarterm, \Delta:=\Gamma/t\in\WMPC$ since otherwise $\Delta\not\in E^C(\Gamma, t)$. In fact this is the case. Since $\Gamma\in \WnullC$ we obtain that $\Gamma$ is a maximal $\LJCS$-consistent set of formulas and hence, whenever $t:A, t(A\to B)\in\Gamma$ for a $\cstarterm \ t$ then by $\axjcstar$ we obtain $t: B\in\Gamma$. This means that whenever $A\in\Delta$ and $A\to B\in\Delta$ then $B\in\Delta$. Hence $\Delta=\Gamma/t$ is closed under modus ponens and therefore $\Delta\in \WMPC$. So together with the former reasoning $\Delta\in E(\Gamma,t)$.
	\end{enumerate}
\item\label{cstar-CM_E} Suppose $\Gamma\in \WnullC$:
\begin{enumerate}
	\item Given some $F\in\LJ, s, t\in\Tm$:
	To prove this, we start by an observation  on the relation between the sets $\Gamma/(s+t)$ and $\Gamma/s$ for $\Gamma\in \WnullC$. If $s:A\in\Gamma$ then since $\Gamma$ is maximal $\LJCS$-consistent $s+t:A\in\Gamma$ hence $\Gamma/s\subseteq\Gamma/(s+t)$. With the same reasoning $\Gamma/t\subseteq\Gamma/(s+t)$. So if $\Delta\supseteq\Gamma/(s+t)$ then $\Delta\supseteq\Gamma/s$ and $\Delta\supseteq\Gamma/t$. Hence $E^C(\Gamma, s+t)\subseteq E^C(\Gamma, s)$ and $E^C(\Gamma, s+t)\subseteq E^C(\Gamma, t)$.\footnote{ Please note if either $s$ or $t$ is a $\cstarterm$ this only holds due to $E^C(\Gamma, s+t)$ being constrained to $\WMP$ by the fast that $s+t$ is a $\cstarterm$ too.} Therefore $E^C(\Gamma, s+t)\subseteq E^C(\Gamma, s)\cap E^C(\Gamma, t)$.
	\item\label{cstar-Ecstar_subset}  This follows directly from the fact that $\cstar$ is a $\cstarterm$ and the definition of $E^C(\Gamma,t)$ for $\cstarterms$.
	\item If \axjd~in $\LJstar$, either $\CS$ is axiomatically approporiate or $(\axjt)\in\LJstar$ too. 	 
	\begin{itemize}
		\item $\CS$ is axiomatically appropriate.
		
	For any $\Gamma\in \WnullC$ we obtain $\neg(t:\perp)\in\Gamma$. Hence $\perp\not\in\Gamma/t$. Suppose towards a contradiction that $\Gamma/t$ is not $\LJCS$-consistent, i.e.~there exist $A_1,\dots A_n\in\Gamma/t$ s.t.~
	\begin{equation}\label{eq:AsToPerp}
	A_1,\dots,A_n\vdash_{\LJCS} \perp.
	\end{equation} 
	This together with the construction of $\Gamma/t$ leads to $t:A_1,\dots,t:A_n\in\Gamma$. Since $\CS$ is axiomatically appropriate we can use \eqref{eq:AsToPerp} to infer $t:A_1,\dots, t:A_n\vdash_\LJCS s(t):\perp$, for some term $s(t)$ only based on $t$. Since $\Gamma$ is assumed to be maximally consistent we can use ($\axjd$) and apply modus ponens to infer $\perp\in\Gamma$ which contradicts the assumption that $\Gamma$ is consistent. 
	 Therefore $\Gamma/t$ is $\LJCS$-consistent and can be expanded by the Lindenbaum Lemma to a maximal $\LJCS$-consistent set $\Delta\supseteq\Gamma/t$ with $\Delta\in \WnullC$ and $\Delta\in E^C(\Gamma, t)$.
	 \item $(\axjt)\in\LJstar$:
	 
	 The claim is a direct consequence of property (\ref{cstar-jt}) (see next item).
	 
	 \end{itemize}
	\item Suppose for some $F\in\LJ, \Gamma\in W^C_0$ and $t\in\Tm$ that $F\in\Gamma/t$, i.e.~$t:F\in\Gamma$, since $\Gamma$ is maximal $\LJCS$-consistent and $t:F\to F$ is an instance of the \axjt-axiom, we conclude that $F\in\Gamma$. Since $F$ was arbitrary we obtain $\Gamma\supseteq\Gamma/t$ and hence $\Gamma\in E^C(\Gamma, t)$.
	\item Suppose for some $\Delta\in E^C(\Gamma, !t)$, hence $\Delta\supseteq\Gamma/!t$. Then assume for some arbitrary $F\in\LJ,\enspace V(\Gamma, t:F)=1$ i.e.~by Definition \ref{cstar-canonical_model} $t:F\in \Gamma$. Since $\Gamma$ is maximal $\LJCS$-consistent and $t:F\to !t:(t:F)$ is an instance of the \axjfour-axiom we obtain $!t:(t:F)\in \Gamma$ and hence $t:F\in\Gamma/!t$. But then $t:F\in\Delta$ and by Definition \ref{cstar-canonical_model} it follows that $V^C(\Delta, t:F)=1$. Since $F$ was an arbitrary formula and $\Delta$ an arbitrary world of $E^C(\Gamma, !t)$ we conclude that the condition holds.
	\item Suppose $(c, A)\in\CS$, then maximal $\LJCS$-consistency implies for all $\Gamma\in W^C_0$ that $c:A\in \Gamma$. Hence $A\in\Gamma/c$ and for all $\Delta\in E^C(\Gamma, c)$ we obtain $A\in\Delta$ and therefore $E^C(\Gamma, c)\subseteq[A]$. 
	
	Furthermore maximal $\LJCS$-consistency implies for all $\Gamma\in \WnullC$  by axiom necessitation that \[\underbrace{!...!}_nc:...:!c:c:A\in\Gamma.\] Hence \[\underbrace{!...!}_{n-1}c:...:!c:c:A\in \Gamma/\underbrace{!...!}_nc\] and for all $\Delta\in E^C(\Gamma, \underbrace{!...!}_nc)$ we obtain \[\underbrace{!...!}_{n-1}c:...:!c:c:A\in\Delta\] and therefore \[E^C(\Gamma, \underbrace{!...!}_nc)\subseteq [\underbrace{!...!}_{n-1}c:...:!c:c:A].\]
\qedhere
\end{enumerate}

\end{enumerate}
\end{proof}
Hence the canonical model is an $\LJCS$-subset model and we are nearly done. The Truth Lemma follows very closely:

\begin{lem}[Truth Lemma]\label{cstar-truth_Lemma} Let $\model^C=(W^C, \WnullC, E^C, V^C)$ be a canonical model, then for all $\Gamma\in \WnullC$:
\[\model^C, \Gamma\Vdash F\text{ if and only if }F\in\Gamma.\]
\end{lem}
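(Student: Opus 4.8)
The plan is to unwind the definitions; no genuine induction on the structure of $F$ is needed here. First I would invoke Lemma~\ref{cstar-cm_is_ssn} to know that, under the stated side conditions on $\axjd$, the canonical model $\model^C$ really is an $\LJCS$-subset model, so that the truth relation $\Vdash$ of Definition~\ref{cstar-truth} is actually applicable to it. Given that, the entire statement is the chain of equivalences
\[
\model^C,\Gamma\Vdash F \quad\text{iff}\quad V^C(\Gamma,F)=1 \quad\text{iff}\quad F\in\Gamma ,
\]
where the first equivalence is Definition~\ref{cstar-truth} and the second is the defining clause for $V^C$ in Definition~\ref{cstar-canonical_model}. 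Since $V^C$ was defined directly on all of $\LJ$, and not merely on the atomic propositions, there is nothing further to check.

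It is worth recording why this short argument is legitimate, i.e.\ where the usual inductive content has gone. In the standard canonical-model construction one defines $V^C$ only on $\Prop$ and then proves the equivalence by induction on $F$, the interesting steps being the implication case and the $t:F$ case. Here those steps were discharged already when verifying in Lemma~\ref{cstar-cm_is_ssn} that $\model^C$ meets the conditions imposed on $V$ inside $\WnullC$: the clause $V^C(\Gamma,G\to H)=1$ iff $V^C(\Gamma,G)=0$ or $V^C(\Gamma,H)=1$ rests on the properties of maximal consistent sets (Lemma~\ref{cstar-maxConsist}(3)), and the clause $V^C(\Gamma,t:F)=1$ iff $E^C(\Gamma,t)\subseteq[F]$ is exactly where the definition of $E^C$, the special treatment of $\cstarterms$, and the $\axjcstar$-axiom are used. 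So the Truth Lemma is a bookkeeping consequence of Lemma~\ref{cstar-cm_is_ssn} rather than an independent argument; the only genuine obstacle, namely turning $\model^C$ into a subset model at all (in particular securing a witness in $\WnullC$ inside $E^C(\Gamma,t)$ for the $\axjd$ case), was already overcome there.

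Finally I would note that the restriction to $\Gamma\in\WnullC$ in the statement is precisely what is needed for the completeness theorem: if $\LJCS\nvdash F$, then $\{\neg F\}$ is $\LJCS$-consistent, so by the Lindenbaum Lemma~\ref{Lindenbaum} there is a maximal $\LJCS$-consistent set $\Gamma\in\WnullC$ with $\neg F\in\Gamma$, whence $F\notin\Gamma$ by maximal consistency, and the Truth Lemma then yields $\model^C,\Gamma\not\Vdash F$. Thus the lemma should be stated and proved exactly for the worlds of $\WnullC$, and I would not attempt to extend it to arbitrary $\Gamma\in W^C$, where the side conditions on $V$ and $E$ are not required to hold.
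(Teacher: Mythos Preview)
Your proposal is correct and takes essentially the same approach as the paper: the paper's proof is literally the two-step chain of equivalences $\model^C,\Gamma\Vdash F \Leftrightarrow V^C(\Gamma,F)=1 \Leftrightarrow F\in\Gamma$, citing Definitions~\ref{cstar-truth} and~\ref{cstar-canonical_model} respectively. Your surrounding discussion (why no induction is needed, how Lemma~\ref{cstar-cm_is_ssn} already absorbed the usual inductive work, and the link to completeness) is accurate and more informative than the paper's one-line proof, but it is commentary rather than a different argument.
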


\begin{proof}
\[
\model^C,\Gamma\Vdash F\xLongLeftRightArrow{\text{Def.}\ \ref{cstar-truth}} V^C(\Gamma, F)=1 \xLongLeftRightArrow{\text{Def.}\ \ref{cstar-canonical_model}} F\in\Gamma
.
\qedhere\]
\end{proof}

Hence each maximal $\LJCS$-consistent set is represented by some world in the canonical model and thus completeness follows directly:

\begin{thm}[Completeness]\label{cstar-completeness} 
	Given some logic $\LJstar$ and a constant specification $\CS$, 
	which is required to be axiomatically appropriate in case $(\axjd)\in\LJstar$ and $(\axjt)\notin\LJstar$,
	then
\[\model, \Gamma\Vdash F\enspace \text{ for all }\LJCS\text{-subset models }\model\text{ and for all } \Gamma\in W_0\Longrightarrow\LJCS\vdash F.\]
\end{thm}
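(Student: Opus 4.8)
The plan is to prove completeness by contraposition, building on the canonical model machinery already in place. Suppose $\LJCS \nvdash F$. Then the singleton set $\{\neg F\}$ is $\LJCS$-consistent: otherwise $\LJCS \vdash \neg F \to \bot$, i.e.\ $\LJCS \vdash \neg\neg F$, and by classical reasoning $\LJCS \vdash F$, contradicting our assumption. (Consistency of $\LJCS$ itself, Lemma~\ref{cstar-L_are-consistent}, guarantees $\bot$ is not derivable, so this argument is not vacuous.)

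Next I would apply the Lindenbaum Lemma (Lemma~\ref{Lindenbaum}) to extend $\{\neg F\}$ to a maximal $\LJCS$-consistent set $\Gamma$, so $\neg F \in \Gamma$ and hence, by Lemma~\ref{cstar-maxConsist}(2), $F \notin \Gamma$. By construction of the canonical model (Definition~\ref{cstar-canonical_model}), $\Gamma \in \WnullC$. Here I invoke the hypothesis of the theorem: since $\CS$ is required to be axiomatically appropriate whenever $(\axjd)\in\LJstar$ and $(\axjt)\notin\LJstar$, the side condition of Lemma~\ref{cstar-cm_is_ssn} is met (case~1 covers $(\axjd)\notin\LJstar$; case~2 covers the remaining situations), so the canonical model $\model^C$ is a genuine $\LJCS$-subset model. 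This is the step that actually consumes the axiomatic-appropriateness assumption, and it is worth flagging explicitly in the write-up.

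Finally I would apply the Truth Lemma (Lemma~\ref{cstar-truth_Lemma}): since $F \notin \Gamma$ and $\Gamma \in \WnullC$, we get $\model^C, \Gamma \nVdash F$. But $\model^C$ is an $\LJCS$-subset model and $\Gamma \in \WnullC$, so the assumed semantic validity ``$\model, \Gamma \Vdash F$ for all $\LJCS$-subset models $\model$ and all $\Gamma \in W_0$'' is contradicted. Hence $\LJCS \vdash F$, which is what we wanted.

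Honestly, there is no serious obstacle here: every nontrivial ingredient — conservativity, consistency, Lindenbaum, the properties of maximal consistent sets, the verification that $\model^C$ is a subset model, and the Truth Lemma — has already been established in the preceding lemmas. The only points requiring a little care are (i) the opening reduction from $\LJCS \nvdash F$ to $\LJCS$-consistency of $\{\neg F\}$, which is a routine classical-logic manipulation, and (ii) making sure the theorem's side condition on $\CS$ is exactly what Lemma~\ref{cstar-cm_is_ssn} needs so that $\model^C$ qualifies as an $\LJCS$-subset model. So the proof is essentially a three-line assembly of prior results, and I would present it as such.
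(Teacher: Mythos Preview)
Your proposal is correct and follows essentially the same route as the paper's own proof: contraposition, consistency of $\{\neg F\}$, Lindenbaum extension to a maximal $\LJCS$-consistent $\Gamma\in\WnullC$, invocation of Lemma~\ref{cstar-cm_is_ssn} (using the side condition on $\CS$) to ensure $\model^C$ is an $\LJCS$-subset model, and the Truth Lemma to obtain $\model^C,\Gamma\nVdash F$. Your write-up is simply more explicit about the intermediate justifications than the paper's three-line version.
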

\begin{proof}
The proof works with contraposition: Assume that $\LJCS\not\vdash F$. Then $\{\neg F\}$ is $\LJCS$-consistent and by the Lindenbaum Lemma contained in some maximal $\LJCS$-consistent world $\Gamma$ of the canonical model $\model^C$. Then $\model^C, \Gamma\not\Vdash F$.
\end{proof}

\section{$\LJCSapp$-subset models}
In this part we present an alternative definition of subset models for justification logic that directly interprets the application operator. Hence we work with the standard language of justification logic and we consider the  \axj-axiom instead of the axiom~$(\cstar)$.

\subsection{Syntax}

In this section, justification terms are built from constants $c_i$ and variables $x_i$ according to the following grammar:
\[t::=c_i \ | \ x_i \ | \ (t\cdot t) \ | \ (t+t) \ | \ !t\]
This set of terms is denoted by $\TmA$. 
The operations $\cdot$ and $+$ are left-associative and $!$ binds stronger than anything else.
Formulas are built from atomic propositions $p_i$ and the following grammar:
\[F::=p_i \ | \ \perp \ | \ F\to F \ | \ t:F\]
The set of atomic propositions is denoted by $\Prop$ and the set of all formulas is denoted by $\LJA$. Again we use the other logical connectives as abbreviations.

As in the first section, we investigate again a whole family of logics. They are arranged in two sets of axioms. The first set, denoted by $\Lappcomp$ contains the following axioms:
\begin{fleqn}
	\begin{equation}
	\begin{array}{ll}\nonumber
	\textbf{cl} & \text{all axioms of classical propositional logic};\\
	\axj & s:(A\to B)\to (t:A\to s\cdot t:B);\\
	\axjplus & s:A\lor t:A\to (s+t): A.
	\end{array}
	\end{equation}
\end{fleqn}
The other is identical to $\Lstarvol$ (modulo the different language) and contains:
\begin{fleqn}
	\begin{equation}
	\begin{array}{ll}\nonumber
	\axjfour & t:A\to !t:(t:A);\\
	\axjd & t:\perp\to\perp;\\
	\axjt & t:A\to A.
	\end{array}
	\end{equation}
\end{fleqn}
For the sake of uniformity we denote this set of axioms by $\Lappvol$.
By $\LogicAPP$ we denote all logics that are composed from the whole set $\Lappcomp$ and some subset of $\Lappvol$.

There are no differences between these logics and the ones of the former section except in case of application. Therefore we skip all the details already mentioned and proved before.\\
$\CS$ and $\LJCSapp$ are defined as before except that the corresponding logic has changed as mentioned. And deducing formulas in $\LJCSapp$ works the same as in the previous section.

\subsection{Semantics}

\begin{defn}[$\LJCSapp$-subset models]\label{APP-subset models} Given some logic $\LJCSapp$ then an $\LJCSapp$-subset model $\mathcal{M}=(W, W_0, V, E)$ is defined like an $\LJCS$-subset model where 
\[
E: W\times\TmA \to \mathcal{P}(W)
\]
meets the following condition for terms  of the form $s \cdot t$:
\[E(\omega, s\cdot t)\subseteq\{\upsilon\in W\enspace|\enspace \forall F\in\APP_\omega(s, t)(\upsilon\in[F])\},\]
where we use
\[\APP_\omega(s, t):=\{F\in\LJA \ |\enspace\exists H\in\LJA \text{ s.t.~}E(\omega, s)\subseteq[H\to F]\text{ and }E(\omega, t)\subseteq[H]\}.\]
\end{defn}

The set $\APP_\omega(s,t)$ contains all formulas that are colloquially said derivable by applying modus ponens to a formula justified by $s$ and a formula justified by~$t$.

Truth in an  $\LJCSapp$-subset models is defined as before.
\begin{defn}[Truth in $\LJCSapp$-subset models]\label{APP-truth} Let $\model=(W, W_0, V, E)$ be an $\LJCSapp$-subset model, then for a world $\omega\in W$ and a formula $F$ we define the relation $\Vdash$ as follows:
	\[\model, \omega\Vdash F\quad\text{ iff }\quad V(\omega, F)=1.\]
\end{defn}

\subsection{Soundness}

\begin{thm}[Soundness of $\LJCSapp$-subset models]\label{APP-soundness} For any justification logic $\LogicAPP$, any constant specification $\CS$ and any formula $F$:
	\[\LJCSapp\vdash F\quad\Rightarrow\quad\model, \omega\Vdash F\quad\text{ for all }\LJCSapp-\text{subset models }\model\text{ and all }\omega\in W_0.\]
\end{thm}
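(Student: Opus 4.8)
The plan is to mimic the proof of \cref{cstar-soundness}, proceeding by induction on the length of the derivation of $F$ in $\LJCSapp$. For all cases except the \axj-axiom the arguments carry over verbatim (or with trivial notational changes) from the proof of \cref{cstar-soundness}: the classical axioms hold because every $\omega\in W_0$ respects the Boolean clauses of $V$; modus ponens is handled by the fact that $\omega\in W_0$ is closed under the implication clause of $V$; axiom necessitation uses the condition on $E(\omega,c)$ and its iterated $!$-variants; and the \axjplus, \axjfour, \axjd, \axjt cases use exactly the same conditions on $E$ that appear unchanged in \cref{APP-subset models} (which is defined as an $\LJCS$-subset model with the extra clause for $s\cdot t$). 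So I would state that these cases are as before and only spell out the new case.

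For the \axj-axiom, suppose $F = s:(A\to B)\to(t:A\to s\cdot t:B)$ and assume $\model,\omega\Vdash s:(A\to B)$ and $\model,\omega\Vdash t:A$ for some $\omega\in W_0$. By \cref{APP-truth} and the $V$-clause for justification formulas, this means $E(\omega,s)\subseteq[A\to B]$ and $E(\omega,t)\subseteq[A]$. Taking $H:=A$ as the witness in the definition of $\APP_\omega(s,t)$, we get $B\in\APP_\omega(s,t)$: indeed $E(\omega,s)\subseteq[A\to B]=[H\to B]$ and $E(\omega,t)\subseteq[H]$. Now by the model condition on $E(\omega,s\cdot t)$ we have
\[
E(\omega,s\cdot t)\subseteq\{\upsilon\in W\mid \forall G\in\APP_\omega(s,t)\,(\upsilon\in[G])\}\subseteq[B],
\]
the last inclusion because $B\in\APP_\omega(s,t)$. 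Hence $V(\omega,s\cdot t:B)=1$, i.e.\ $\model,\omega\Vdash s\cdot t:B$, which discharges the two assumptions and gives $\model,\omega\Vdash F$.

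I expect the \axj-case to be the only real content, and even there the main subtlety is purely bookkeeping: making sure the existential witness $H$ in the definition of $\APP_\omega(s,t)$ is chosen correctly (namely $H=A$) and that the nested quantifier ``$\forall G\in\APP_\omega(s,t)$'' in the condition on $E(\omega,s\cdot t)$ is read as giving membership in $[B]$ once $B$ is known to lie in $\APP_\omega(s,t)$. There is no genuine obstacle, since the definition of $\APP_\omega(s,t)$ was tailored precisely so that this inclusion goes through; the one thing worth a sentence of care is that soundness is again only claimed for $\omega\in W_0$, because non-normal worlds need not even validate classical logic, exactly as remarked before \cref{cstar-soundness}.
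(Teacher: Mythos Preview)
Your proposal is correct and matches the paper's own proof essentially line for line: the paper also reduces everything to the \axj-case by appeal to \cref{cstar-soundness}, and then argues exactly as you do that $E(\omega,s)\subseteq[A\to B]$ and $E(\omega,t)\subseteq[A]$ yield $B\in\APP_\omega(s,t)$, whence the model condition on $E(\omega,s\cdot t)$ gives $E(\omega,s\cdot t)\subseteq[B]$. If anything, you are slightly more explicit than the paper in naming the witness $H:=A$, which the paper leaves implicit.
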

\begin{proof} The proof is by induction on the length of the derivation of $F$ and it is analogue to the proof of Theorem \ref{cstar-soundness}. The only thing that changes is the case, where $F$ is an instance of the $\axj$-axiom:\\
	Then $F=s:(A\to B)\to(t:A\to s\cdot t:B)$ for some $s, t \in\TmA$ and $A, B\in\LJA$.
	Assume for some $\omega\in W_0$ that $\mathcal{M}, \omega\Vdash s:(A\to B)$ and $\mathcal{M}, \omega\Vdash t:A$. Then by Definition \ref{APP-truth} $V(\omega, s:(A\to B))=1$. Hence since $\omega\in W_0$ we obtain $E(\omega, s)\subseteq[A\to B]$ and by the same reason $V(\omega, t:A)=1$ and $E(\omega, t)\subseteq[A]$. From the definition of $\APP_\omega(r, s)$ we conclude that $B\in\APP_\omega(s, t)$. So for all $\upsilon\in E(\omega, s\cdot t)$ we obtain by the requirements of $E$ that $V(\upsilon, B)=1$ hence $E(\omega, s\cdot t)\subseteq[B]$. From this, the fact that $\omega\in W_0$ and the requirements of $V$ in $W_0$ we obtain $V(\omega, s\cdot t:B)=1$, which is by Definition \ref{APP-truth} $\mathcal{M}, \omega\Vdash s\cdot t:B$.\qedhere

\end{proof}

\subsection{Completeness}
Before we start defining a canonical model, we have to do the same preliminary work for $\LJCSapp$ as we had to do in the previous section for $\LJCS$. Since the logics $\LJCS$ from the former section differ only in one axiom, i.e. $\axj$ replaces $\axjcstar$, we skip all the parts that are already done and focus on the changes that it brings about.

As before, we have a conservativity and consistency result.
\begin{thm}[Conservativity]\label{APP-conservativity} All  logics $\LogicAPP$ presented are conservative extensions of the classical logic $\CL$, i.e. for any formula $F\in\Lcp$:
	\[\LogicAPP\vdash F\quad\Leftrightarrow\quad\CL\vdash F.\]
\end{thm}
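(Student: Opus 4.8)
The plan is to mirror the proof of \Cref{cstar-conservativity} exactly, since the only change between the two logic families is that $\axj$ replaces $\axjcstar$ (and the term grammar carries a $\cdot$ instead of $\cstar$). The right-to-left direction is again immediate because $\LogicAPP$ contains all axioms of $\CL$ and its rules include modus ponens. For the left-to-right direction I would reuse the forgetful translation $\mathfrak{t}:\LJA\to\Lcp$ defined by $\mathfrak{t}(P):=P$, $\mathfrak{t}(\perp):=\perp$, $\mathfrak{t}(A\to B):=\mathfrak{t}(A)\to\mathfrak{t}(B)$, and $\mathfrak{t}(s:A):=\mathfrak{t}(A)$, which strips out all justification terms and fixes every $\Lcp$-formula.

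Then I would argue by induction on the length of a derivation of $A$ in $\LJCSapp$ that $\CL\vdash\mathfrak{t}(A)$. The cases for $\axcl$, $\axjplus$, $\axjfour$, $\axjd$, $\axjt$, modus ponens, and axiom necessitation are verbatim as in the proof of \Cref{cstar-conservativity}: the classical axioms translate to instances of themselves, $\axjplus$ translates to the tautology $A\lor A\to A$, the three volume axioms translate to $A\to A$, modus ponens is preserved because $\mathfrak{t}$ commutes with $\to$, and axiom necessitation produces $c:B$ with $\mathfrak{t}(c:B)=B$ an axiom. The one genuinely new case is $\axj$: here $\mathfrak{t}\bigl(s:(A\to B)\to(t:A\to s\cdot t:B)\bigr) = (\mathfrak{t}(A)\to\mathfrak{t}(B))\to(\mathfrak{t}(A)\to\mathfrak{t}(B))$, which is again a classical tautology of the form $C\to C$. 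Since $\mathfrak{t}(F)=F$ for $F\in\Lcp$, this gives $\CL\vdash F$ whenever $\LogicAPP\vdash F$.

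I do not expect any real obstacle: this is a routine adaptation, and the only point worth care is confirming that the translation of the $\axj$-axiom collapses to a tautology — which it does, because both the antecedent $s:(A\to B)$ and the consequent-conclusion $s\cdot t:B$ ultimately translate through $\mathfrak{t}(t:A):=\mathfrak{t}(A)$, so the application structure disappears entirely. One should also double-check that $\mathfrak{t}$ is well-defined on all of $\LJA$ given the new term grammar with $\cdot$; since $\mathfrak{t}$ ignores the term in $t:A$ altogether, the term syntax is irrelevant and well-definedness is clear. Hence the theorem follows by the same two-line structure as \Cref{cstar-conservativity}.
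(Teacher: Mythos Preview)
Your proposal is correct and follows exactly the approach the paper intends: the paper gives no separate proof for \Cref{APP-conservativity} but simply remarks that the logics differ from $\LJCS$ only in replacing $\axjcstar$ by $\axj$, so the proof of \Cref{cstar-conservativity} carries over verbatim with the single new case you handle. Your computation $\mathfrak{t}\bigl(s{:}(A\to B)\to(t{:}A\to s\cdot t{:}B)\bigr)=(\mathfrak{t}(A)\to\mathfrak{t}(B))\to(\mathfrak{t}(A)\to\mathfrak{t}(B))$ is exactly the missing detail the paper leaves implicit.
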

\begin{lem}[Consistency of $\LogicAPP$]\label{APP-consistency} All logics in $\LogicAPP$ are consistent.
\end{lem}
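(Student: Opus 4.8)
The plan is to read off consistency directly from the conservativity result just established, exactly as was done in the first section for Lemma~\ref{cstar-L_are-consistent}. Recall from Definition~\ref{cstar-Consistency} that a logic $\Logic$ is consistent precisely when $\Logic\not\vdash\perp$. Since $\perp$ contains no justification terms, it belongs to the classical fragment $\Lcp$, and hence it falls within the scope of the conservativity biconditional of Theorem~\ref{APP-conservativity}.

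First I would fix an arbitrary logic from the family $\LogicAPP$ and argue by contraposition. Suppose towards a contradiction that this logic proves $\perp$, i.e.\ $\LJCSapp\vdash\perp$. Because $\perp\in\Lcp$, Theorem~\ref{APP-conservativity} applies to $\perp$ and yields $\CL\vdash\perp$. But classical propositional logic is consistent, that is $\CL\not\vdash\perp$, which is the desired contradiction. Hence $\LJCSapp\not\vdash\perp$, and since the logic was arbitrary, every logic in $\LogicAPP$ is consistent.

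The one point worth flagging is that all the genuine work has already been discharged in proving Theorem~\ref{APP-conservativity}: the term-erasing translation $\mathfrak{t}$ and the induction on derivation length are what do the heavy lifting, and once that biconditional is in hand the present lemma is immediate. So I do not anticipate any real obstacle here; the argument is a one-line appeal to conservativity combined with the classical consistency of $\CL$, and it is structurally identical to the proof of Lemma~\ref{cstar-L_are-consistent} in the $\cstar$-setting.
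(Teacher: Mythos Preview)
Your proposal is correct and mirrors the paper's approach exactly: the paper simply remarks ``As before'' and relies on conservativity over $\CL$ (the analogue of Lemma~\ref{cstar-L_are-consistent}), which is precisely the one-line contraposition you spell out. The only cosmetic point is that the lemma is phrased for the family $\LogicAPP$ rather than a particular $\LJCSapp$, but your argument goes through verbatim for any member of the family.
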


All other ingredients that we needed in the former section to define and further develop the canonical model were generally defined and proven and can be adopted without additional effort.

To prove completeness we define a canonical model as follows:
\begin{defn}[Canonical Model]\label{APP-canonical_model} For a given logic $\LogicAPP$ and a constant specification $\CS$
	 we define the canonical model $\model^C=(W^C, \WnullC, V^C, E^C)$ by:
	\begin{itemize}
		\item $W^C=\mathcal{P}(\LJA)$;
		\item $\WnullC=\{\Gamma\in W^C\enspace|\enspace \Gamma\text{ is maximal }\LJCSapp-\text{consistent set of formulas}\}$;
		\item $V^C: V^C(\Gamma, F)=1\quad\text{ iff }\quad F\in\Gamma$;
		\item $E^C: E^C(\Gamma, t)=\{\Delta\in W\enspace| \enspace\Delta\supseteq \Gamma/t\}$. 
	\end{itemize}
\end{defn}
Now we must show that such a canonical model is in fact a subset model.
\begin{lem}
	\label{APP-cm_is_ssn} 
	The canonical model $\model^C$ is an $\LJCSapp$-subset model 			
	if $\LogicAPP$ does not contain (\axjd) or contains it but the corresponding constant specification $\CS$ is axiomatically appropriate or $(\axjt)\in\LogicAPP$ too.
\end{lem}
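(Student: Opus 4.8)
The plan is to mirror the proof of Lemma~\ref{cstar-cm_is_ssn}, verifying that $\model^C$ from Definition~\ref{APP-canonical_model} satisfies every clause of Definition~\ref{APP-subset models}. Since Definition~\ref{APP-subset models} only differs from Definition~\ref{cstar-subset models} in the treatment of the application terms $s\cdot t$ (and in the absence of the $\cstar$-related clauses), most of the verification carries over verbatim: the nonemptiness of $\WnullC$ follows from consistency (Lemma~\ref{APP-consistency}) and the Lindenbaum Lemma; the clauses on $V^C$ for $\perp$, $\to$ and $t:F$ are exactly as before, using the properties of maximal consistent sets and the fact that $E^C(\Gamma,t)=\{\Delta\in W^C\mid \Delta\supseteq\Gamma/t\}$; the clauses for $+$, $(\axjd)$, $(\axjt)$, $(\axjfour)$ and the constant specification are established by the same arguments as in the proof of Lemma~\ref{cstar-cm_is_ssn}, since none of these used $\axjcstar$.

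Next I would treat the one genuinely new obligation: showing that
\[
E^C(\Gamma, s\cdot t)\subseteq\{\Delta\in W^C\enspace|\enspace \forall F\in\APP_\Gamma(s,t)\ (\Delta\in[F])\}.
\]
So fix $\Delta\in E^C(\Gamma,s\cdot t)$, i.e.\ $\Delta\supseteq\Gamma/(s\cdot t)$, and fix $F\in\APP_\Gamma(s,t)$. By definition of $\APP_\Gamma(s,t)$ there is $H\in\LJA$ with $E^C(\Gamma,s)\subseteq[H\to F]$ and $E^C(\Gamma,t)\subseteq[H]$. The key sublemma is that in the canonical model $E^C(\Gamma,r)\subseteq[G]$ is equivalent to $r:G\in\Gamma$ — this is precisely clause~\ref{cstar-justification} of the valuation condition, which I will have already verified. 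Hence $s:(H\to F)\in\Gamma$ and $t:H\in\Gamma$. Since $\Gamma$ is maximal $\LJCSapp$-consistent and $s:(H\to F)\to(t:H\to s\cdot t:F)$ is an instance of the $\axj$-axiom, two applications of Lemma~\ref{cstar-maxConsist} give $s\cdot t:F\in\Gamma$, i.e.\ $F\in\Gamma/(s\cdot t)\subseteq\Delta$, so $V^C(\Delta,F)=1$ and $\Delta\in[F]$. As $F$ was arbitrary in $\APP_\Gamma(s,t)$, this is what was needed.

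There is a subtlety to flag, analogous to the $\cstarterm$ caveat in the proof of Lemma~\ref{cstar-cm_is_ssn}: the application clause must be checked for \emph{every} term of the form $s\cdot t$, including the case where $s$ or $t$ is itself a compound term, but the argument above is uniform in $s$ and $t$ so no separate case analysis is required. I expect the main obstacle — really the only non-routine point — to be getting the direction of the implication in $\APP_\Gamma(s,t)$ right and recognizing that the existential witness $H$ is exactly the antecedent needed to fire the $\axj$-axiom; once the canonical-model identity ``$E^C(\Gamma,r)\subseteq[G]\iff r:G\in\Gamma$'' is in hand, everything reduces to propositional reasoning inside $\Gamma$. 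Finally, the three-way side condition on $(\axjd)$ is handled exactly as in Lemma~\ref{cstar-cm_is_ssn}: if $\CS$ is axiomatically appropriate one shows $\Gamma/t$ is consistent and extends it via Lindenbaum to a witness in $\WnullC\cap E^C(\Gamma,t)$, using that $A_1,\dots,A_n\vdash_{\LJCSapp}\perp$ lifts to $t:A_1,\dots,t:A_n\vdash_{\LJCSapp} s(t):\perp$ for a suitable term $s(t)$; if instead $(\axjt)\in\LogicAPP$, the $(\axjd)$-condition follows from the already-established $(\axjt)$-condition $\Gamma\in E^C(\Gamma,t)$.
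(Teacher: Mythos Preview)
Your proposal is correct and follows essentially the same approach as the paper's own proof: all conditions except the application clause are referred back to Lemma~\ref{cstar-cm_is_ssn}, and the application clause is verified by showing that $F\in\APP_\Gamma(s,t)$ implies $s\cdot t:F\in\Gamma$ via the already-established equivalence $E^C(\Gamma,r)\subseteq[G]\iff r:G\in\Gamma$ together with the $\axj$-axiom and maximal consistency of $\Gamma$. The paper's argument is identical in structure and detail.
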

\begin{proof} In order to prove that, we have to proceed in the same way as in the previous section, i.e.~showing that $\model^C$ meets all the conditions we made for the valuation and the evidence function as well as the constant specification.
	
		\begin{enumerate}
			\item\label{APP-w0_notEmpty} $W_0\neq\emptyset.$
			\item\label{APP-W_0} For all $\Gamma\in \WnullC$:
			\begin{enumerate}
				\item\label{APP-not_perp} $V^C(\Gamma, \perp)=0$;
				\item\label{APP-implication} $V^C(\Gamma, F\to G)=1\quad\text{ iff }\quad V^C(\Gamma,F)=0$ or $V^C(\Gamma, G)=1$;
				\item\label{APP-justification} $V^C(\Gamma, t:F)=1\quad\text{ iff }\quad E(\Gamma, t)\subseteq[F]$.
			\end{enumerate}
			\item\label{APP-E} For all $\Gamma\in \WnullC, F\in\LJA, s, t\in\TmA$:
			\begin{enumerate}
				\item\label{APP-application} $E^C(\Gamma, s\cdot t)\subseteq\{\Delta\in W^C\enspace|\enspace \forall F\in\APP_\Gamma(s, t)(\Delta\in[F])\}$;
				\item\label{APP-plus} $E^C(\Gamma, s+t)\subseteq E^C(\Gamma, s)\cap E^C(\Gamma, t)$;
				\item\label{APP-jd} if \axjd~in L: $\forall \Gamma\in \WnullC$ and $\forall t\in\TmA: \exists\Delta\in \WnullC$ s.t.~$\Delta\in E^C(\Gamma, t)$;
				\item\label{APP-jt} if \axjt~in L: $\forall \Gamma\in \WnullC$ and $\forall t\in\TmA: \Gamma\in E^C(\Gamma, t)$;
				\item\label{APP-j4} if \axjfour~in L: 
					\begin{multline*}
					E^C(\Gamma, !t)\subseteq\\ \{\Delta\in W^C\enspace|\enspace\forall F\in\LJA (V^C(\Gamma, t:F)\Rightarrow V^C(\Delta, t:F)=1)\}.
					\end{multline*}
			\end{enumerate}
			\item\label{APP-CS} For all $(c, A)\in\CS,\quad E^C(\Gamma)\subseteq[A]$ for all $\Gamma\in W_0$.
		\end{enumerate}
	
 Since the canonical model is defined in the same way as the one of $\LJCS$-subset models, the corresponding proofs can be reused (see Lemma \ref{cstar-cm_is_ssn}). Nevertheless, there is some difference. Instead of showing that $E^C(\Gamma,\cstar)\subseteq\WMP^C$ we have to show that $E^C(\Gamma, s\cdot t)\subseteq\{\Delta\in W^C\enspace|\enspace \forall F\in\APP_\Gamma(s, t)(\Delta\in[F])\}$. Assume that we are given $\Gamma\in W^C_0$, $F\in\LJA$, $s, t \in\TmA$.
Take any  $\Delta\in E^C(\Gamma, s\cdot t)$, i.e.~$\Delta\supseteq\Gamma/(s\cdot t)$. Hence for all~$F$ \/ s.t.~$s\cdot t:F\in\Gamma$ we know that $F\in\Delta$. Hence by the definition of~$V^C$, we have $V(\Delta, F)=1$ and therefore $\Delta\in[F]$. 

It remains to show: if $F\in\APP_\Gamma(s, t)$ then $s\cdot t:F\in \Gamma$. Suppose for some formula $F$ that $F\in\APP_\Gamma(s, t)$ then by definition of $\APP_\Gamma(s, t)$ we know that there is a formula $H$ s.t.~$E^C(\Gamma, s)\subseteq[H\to F]$ and $E^C(\Gamma, t)\subseteq[H]$. By using Lemma \ref{APP-cm_is_ssn} (\ref{APP-justification}) we conclude $V^C(\Gamma, s:(H\to F))=1$ and $V^C(\Gamma, t:H)=1$. Hence by the definition of $V^C$ we obtain $s:(H\to F)\in\Gamma$ and $t:H\in\Gamma$ and since $\Gamma$ is maximal $\LJCSapp$-consistent and $s:(H\to F)\to(t:H\to s\cdot t:F)$ is an instance of the $\axj$-axiom we conclude that $s\cdot t:F\in\Gamma$.\qedhere

\end{proof}

\begin{lem}[Truth Lemma]\label{APP-truth_Lemma} Let $\model^C=(W^C, \WnullC, E^C, V^C)$ be some canonical $\LJCSapp$-subset model, then for all $\Gamma\in W_0$:
	\[\model^C, \Gamma\Vdash F\text{ if and only if }F\in\Gamma.\]
\end{lem}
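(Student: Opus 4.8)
The proof of the Truth Lemma for $\LJCSapp$-subset models proceeds exactly as in the $\cstar$-case (Lemma \ref{cstar-truth_Lemma}), since in the canonical model the valuation $V^C$ is defined directly by membership. The plan is simply to chain the two definitional equivalences.

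\begin{proof}
For all $\Gamma\in W_0$ we have
\[
\model^C,\Gamma\Vdash F
\xLongLeftRightArrow{\text{Def.}\ \ref{APP-truth}} V^C(\Gamma, F)=1
\xLongLeftRightArrow{\text{Def.}\ \ref{APP-canonical_model}} F\in\Gamma.
\qedhere\]
\end{proof}

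The only point that genuinely requires work is the implicit claim that this canonical model is a legitimate $\LJCSapp$-subset model, so that Definitions \ref{APP-truth} and \ref{APP-canonical_model} actually apply; but that has already been established in Lemma \ref{APP-cm_is_ssn} (under the stated hypothesis on $\CS$ and $\axjd$, $\axjt$). In particular, the clause $V^C(\Gamma,t:F)=1$ iff $E^C(\Gamma,t)\subseteq[F]$ — which one might expect to need a separate inductive argument, as in other justification-logic completeness proofs — is here part of the verification that $\model^C$ meets the subset-model conditions, and was discharged in the proof of Lemma \ref{APP-cm_is_ssn} (item \ref{APP-justification}), reusing the corresponding argument from Lemma \ref{cstar-cm_is_ssn}. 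Hence no induction on formula complexity is needed here: the semantics was set up so that truth at a canonical world coincides with the valuation, and the valuation coincides with set membership by construction. The main (and only) obstacle, then, was not in this lemma at all but in Lemma \ref{APP-cm_is_ssn}, specifically in showing $E^C(\Gamma,s\cdot t)\subseteq\{\Delta\mid \forall F\in\APP_\Gamma(s,t)\,(\Delta\in[F])\}$, which uses the $\axj$-axiom together with the maximal consistency of $\Gamma$.
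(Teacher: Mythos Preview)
Your proof is correct and follows exactly the paper's own argument: the chain of equivalences via Definitions~\ref{APP-truth} and~\ref{APP-canonical_model} is precisely what the paper gives. Your additional commentary correctly identifies that all the real work was already absorbed into Lemma~\ref{APP-cm_is_ssn}, so nothing further is needed here.
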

\begin{proof} 
	\[\model^C,\Gamma\Vdash F\xLongLeftRightArrow{\text{Def.}\ \ref{APP-truth}} V^C(\Gamma, F)=1 \xLongLeftRightArrow{\text{Def.}\ \ref{APP-canonical_model}} F\in\Gamma.\qedhere\]
\end{proof}

\begin{thm}[Completeness]\label{APP-completeness} Given some logic $\LogicAPP$ and a constant specification $\CS$, 
	which is axiomatically appropriate in case $(\axjd)\in\LogicAPP$,
	 then
	\[\model, \Gamma\Vdash F\enspace \text{ for all models }\model\text{ and for all } \Gamma\in W_0\Longrightarrow\LJCSapp\vdash F.\]
\end{thm}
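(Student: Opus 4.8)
The plan is to mirror the completeness proof for $\LJCS$-subset models (Theorem~\ref{cstar-completeness}) essentially verbatim, since all the infrastructure has already been put in place. First I would argue by contraposition: assuming $\LJCSapp\nvdash F$, I want to produce a model $\model$ and a world $\Gamma\in W_0$ with $\model,\Gamma\nVdash F$. By the conservativity result (Theorem~\ref{APP-conservativity}) and the consistency of the logics (Lemma~\ref{APP-consistency}), the set $\{\neg F\}$ is $\LJCSapp$-consistent, so by the Lindenbaum Lemma (which, being stated for arbitrary logics $\Logic$, applies here unchanged) there is a maximal $\LJCSapp$-consistent set $\Gamma$ with $\neg F\in\Gamma$, equivalently $F\notin\Gamma$.

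Next I would invoke Lemma~\ref{APP-cm_is_ssn}: under the stated hypothesis on $\CS$ (axiomatically appropriate whenever $(\axjd)\in\LogicAPP$, noting that if $(\axjt)\in\LogicAPP$ the hypothesis of that lemma is also met, so the statement is at least as general), the canonical model $\model^C$ of Definition~\ref{APP-canonical_model} is a genuine $\LJCSapp$-subset model. This is the only place where the hypothesis on the constant specification is used. Then the Truth Lemma (Lemma~\ref{APP-truth_Lemma}) gives $\model^C,\Gamma\Vdash G$ iff $G\in\Gamma$ for all $\Gamma\in\WnullC$; applying it to $G=F$ at the world $\Gamma$ constructed above yields $\model^C,\Gamma\nVdash F$, and since $\Gamma\in\WnullC=W_0$ of the canonical model, this contradicts the assumption that $F$ holds in all $W_0$-worlds of all models. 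Hence $\LJCSapp\vdash F$.

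There is essentially no obstacle here, because the hard work — in particular verifying that the canonical $E^C$ satisfies the application condition via the $\axj$-axiom and the definition of $\APP_\Gamma(s,t)$ — has already been discharged in Lemma~\ref{APP-cm_is_ssn}. The only point requiring a line of care is making sure the hypothesis on $\CS$ in the theorem statement matches (or is subsumed by) the hypothesis of Lemma~\ref{APP-cm_is_ssn}: the theorem asks only for axiomatic appropriateness when $(\axjd)$ is present, which is exactly (a sufficient condition within) the disjunction appearing in Lemma~\ref{APP-cm_is_ssn}. So the proof is a short assembly of the preceding lemmas, exactly parallel to the proof of Theorem~\ref{cstar-completeness}.

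\begin{proof}
The proof works by contraposition, exactly as for Theorem~\ref{cstar-completeness}. Assume $\LJCSapp\nvdash F$. Then $\{\neg F\}$ is $\LJCSapp$-consistent (using Lemma~\ref{APP-consistency}), so by the Lindenbaum Lemma it is contained in some maximal $\LJCSapp$-consistent set $\Gamma$, which is a world of $\WnullC$ in the canonical model $\model^C$ of Definition~\ref{APP-canonical_model}. By Lemma~\ref{APP-cm_is_ssn}, $\model^C$ is an $\LJCSapp$-subset model, since the hypothesis on $\CS$ in the present statement (axiomatic appropriateness whenever $(\axjd)\in\LogicAPP$) implies the hypothesis of that lemma. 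Since $F\notin\Gamma$, the Truth Lemma (Lemma~\ref{APP-truth_Lemma}) gives $\model^C,\Gamma\nVdash F$ with $\Gamma\in W_0$. This is the required counterexample.
\end{proof}
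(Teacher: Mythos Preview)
Your proposal is correct and follows exactly the same route as the paper: the paper's proof simply says it is analogous to Theorem~\ref{cstar-completeness}, i.e., contraposition, Lindenbaum, canonical model, Truth Lemma. Your write-up is just a slightly more explicit version of that same argument.
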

\begin{proof}
	The proof is analogue to the one of Theorem \ref{cstar-completeness}.
\end{proof}


\section{Artemov's aggregated evidence and $\LJCS$-subset models}

Artemov \cite{artemov2016onAggregatingPE} considers the case in which we have a database, i.e.~a set of propositions $\Gamma=\{F_1,\dots F_n\}$ with some kind of probability estimates and in which we also have some proposition $X$ that logically follows from $\Gamma$. Then we can search for the best justified lower bound for the probability of $X$. He presents us a nice way to find this lower bound. To find it, he assumes probability events $u_1,\dots,u_n$, each of them supporting some proposition in $\Gamma$, i.e.~$u_i:F_i$, and calculates some aggregated evidence $e(u_1,\dots,u_n)$ for $X$ with them. The probability of $e$ then provides a tight lower bound for the probability of $X$.

The trick he uses is the following:
\begin{enumerate}
\item\label{ArtfirstStep} First he collects all subsets $\Delta_i$ of $\Gamma$ which support $X$, i.e.~$\Delta_i\vdash X$, and creates a new evidence $t_i$ from all the corresponding $u_{i_j}$ s.t. $u_{i_j}:F_{i_j}$ for each $F_{i_j}\in\Delta_i$.
\item\label{ArtSecondStep} In the second step he combines all these new pieces of evidence to a new evidence (the so-called aggregated evidence) that actually is the greatest evidence supporting $X$.
\end{enumerate}

The model he has in mind contains some evaluation in a probability space $(\Omega, \mathcal{F}, P)$ with a mapping $\star$ from propositions to $\Omega$ and evidence terms to $\mathcal{F}$ that meets some restrictions (for more details on this see \cite{artemov2016onAggregatingPE}). Step (\ref{ArtfirstStep}) is to create a new evidence $t_i$ for each $\Delta_i$ described above, which consists of the intersection of the corresponding $u_{i_j}\hspace{-3pt}\text{'s}$.
\[t_{i}:=\bigcap \{u_{i_j}\ | \ u_{i_j}\subseteq F_{i_j}^\star \text{ for some }F_{i_j}\in\Delta_i\}.\]
Step (\ref{ArtSecondStep}) then is to union all these pieces of evidence to a new so-called aggregated evidence:
\[\AEGX:=\bigcup\{t_i\ | \ t_i \text{ is an evidence for }X\text { obtained by step (\ref{ArtfirstStep})}\}.\]

On the syntactic side evidence terms are built from variables $u_1,\dots,u_n$, constants $0$ and $1$ and operations $\cap$ and $\cup$, where $st$ is used as an abbreviation for $s\cap t$. With this we can built a free distributive lattice $\mathcal{L}_n$ where $st$ is the meet and $s\cup t$ is the join of $s$ and $t$, $0$ is the bottom and $1$ the top element of this lattice. 
Moreover Artemov defines formulas in a usual way from propositional letters $p, q, r,\dots$ by the usual connectives and adds formulas of the kind $t:F$ where $t$ is an evidence term and $F$ a purely propositional formula.

The logical postulates of the logic of Probabilistic Evidence $\PE$ are:
\begin{enumerate}
	\item axioms and rules of classical logic in the language of $\PE$;
	\item $s:(A\to B)\to(t:A\to [st]:B)$;
	\item\label{axPE_suniont} $(s:A\land t:A) \to[s\cup t]:A$;
	\item $1:A$, where $A$ is a propositional tautology,\\
	$0:F$, where $F$ is a propositional formula;
	\item\label{axPEpreorder} $t:X\to s:X$, for any evidence terms $s$ and $t$ such that $s\preceq t$ in $\mathcal{L}_n$.
\end{enumerate}
Artemov presents Soundness and Completeness proofs connecting $\PE$ with the presented semantic, for more details see \cite{artemov2016onAggregatingPE}.

Before we can start adapting Artemovs approach to our models, we have to point out some differences between the semantics and syntax used. First, contrary to the models of Artemov, subset models may contain inconsistent worlds, but this does not significantly affect the applicability of Artemov's approach on them.

Another difference is that our evidence function has another domain. In Artemov's models the evidence functions is $E:\Tm\to\powerset(\Omega)$ while in our models it is $E:W\times\Tm\to\powerset(W)$. This difference is due to the fact that we allow terms to justify non-purely propositional formulas. Although we need to adapt Artemov's definitions, these adaptations will maintain the essential characteristics. So let's adapt the $\LJCS$-subset models to aggregated $\LJCS$-subset models
 by first describing the new syntax for the terms:
\begin{defn}[Justification Terms] 
 Justification terms are built from constants $0, 1, c_i$ and variables $x_i$ and the special and unique constant $\cstar$ according to the following grammar:
 \[t::=0 \ | \ 1\ | \ c_i\ | \ x_i\ | \ \cstar \ | \ (t+t) \ | \ (t\cup t) \ | \ !t\]
\end{defn}
This set of terms is denoted by $\TmP$. As before, we introduce the abbreviation $st:=s+t+\cstar$.\\
Even though we have other operators as well, we can construct a free distributive lattice where we take $s+t$ as the meet of $s$ and $t$, $s\cup t$ as the join of them, $0$ as the bottom element of the lattice. Note, that $st$ then is the meet of $s$, $t$, and~$\cstar$. Moreover, $1$ and $!t$ are treated like constants.\footnote{We do not claim that $1$ is the top element since some set $E(\omega, t)$ for a world $\omega\in W_0$ and $t\in\TmP$ may contain non-normal worlds. If we claimed that $1$ was the top element we would obtain $t\preceq 1$ and furthermore the set $E(\omega, 1)$ would contain non-normal worlds as well. But since in non-normal worlds axioms may not be true, $E(\omega, 1)\not\subseteq [A]$ for some axiom $A$ may be the case and therefore axiom (4) would fail.}  As usual, we have
\begin{equation}\label{eq:less:1}
s\preceq t  \quad\text{if{f}}\quad s \cup t = t
\end{equation}

There is no difference to our subset models regarding the rules for forming formulas except that the terms are contained in $\TmP$, of course. The set of formulas built according to these grammar and rules is denoted by $\LJprob$.

In the definition of $\LJCS$-subset models we only change the conditions on the evidence function and the domain of $V$. 
\begin{defn}[$\PE$-adapted $\LJCS$-subset models]\label{adapted_ssm}
An $\LJCS$-subset model is called a $\PE$-adapted $\LJCS$-subset model if the valuation function and the evidence function meet the additional conditions respectively are redefined as follows:\\
$V:W\times \LJprob\to\{0, 1\}$ where all conditions listed in Definition \ref{cstar-subset models} remain the same.\\
For all $\omega\in W_0$ and for all $s,t\in\TmP$
\begin{itemize}
	\item $E(\omega,1)=W_0$;
	\item $E(\omega,0)=\emptyset$;
	\item $E(\omega,s\cup t) = E(\omega,s)\cup E(\omega, t)$. 
\end{itemize}
\end{defn}

And in fact, such an $\PE$-adapted $\LJCS$-subset model is a model of probabilistic evidence $\PE$.

\begin{thm}[Soundness]
	$\PE$-adapted $\LJCS$-subset models $\model$ are sound with respect to probabilistic evidence $\PE$, i.e. for all $F\in\LJprob$
	\[\PE\vdash F\enspace \Rightarrow\enspace \model,\omega\Vdash F\enspace\text{ for all }\PE\text{-adapted }\LJCS\text{-subset models and all }\omega\in W_0.\]
\end{thm}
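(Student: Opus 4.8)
The plan is to argue by induction on the length of a $\PE$-derivation of $F$, showing that every postulate of $\PE$ is true at every $\omega\in W_0$ of every $\PE$-adapted $\LJCS$-subset model and that modus ponens preserves this. The classical postulates (postulate~(1)) and the modus ponens step go through exactly as in the proof of Theorem~\ref{cstar-soundness}: at a world $\omega\in W_0$ the valuation $V$ behaves classically on $\to$ and $\perp$, so every propositional-logic instance is true there and truth is closed under modus ponens. Hence only the genuinely new postulates (2)--(5) require attention, and two of them reduce immediately to results already available.

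Postulate~(2), $s:(A\to B)\to(t:A\to st:B)$, is literally the \axj-axiom for the abbreviation $st=s+t+\cstar$; since a $\PE$-adapted $\LJCS$-subset model is in particular an $\LJCS$-subset model, the argument of Lemma~\ref{cstar-j-follows} applies verbatim. Postulate~(3), $(s:A\land t:A)\to (s\cup t):A$, uses the new clause $E(\omega,s\cup t)=E(\omega,s)\cup E(\omega,t)$ from Definition~\ref{adapted_ssm}: if $E(\omega,s)\subseteq[A]$ and $E(\omega,t)\subseteq[A]$ then $E(\omega,s\cup t)\subseteq[A]$, so $\model,\omega\Vdash (s\cup t):A$. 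For postulate~(4): $E(\omega,1)=W_0$, and every propositional tautology $A$ is true in every world of $W_0$ (again by classical soundness inside $W_0$), so $E(\omega,1)\subseteq[A]$ and $\model,\omega\Vdash 1:A$; and $E(\omega,0)=\emptyset\subseteq[F]$ trivially, so $\model,\omega\Vdash 0:F$ for every propositional $F$.

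The real work is postulate~(5), $t:X\to s:X$ for $s\preceq t$. It suffices to prove the monotonicity lemma: if $s\preceq t$ in the associated free distributive lattice $\mathcal{L}_n$ then $E(\omega,s)\subseteq E(\omega,t)$ for every $\omega\in W_0$; from $E(\omega,t)\subseteq[X]$ we then get $E(\omega,s)\subseteq[X]$, i.e.\ $\model,\omega\Vdash s:X$. I would prove this lemma by passing to disjunctive normal form in $\mathcal{L}_n$: write $s=\bigvee_i\bigwedge S_i$ and $t=\bigvee_j\bigwedge T_j$ with each $S_i,T_j$ a finite set of lattice generators (variables, constants, $\cstar$, $1$, and terms $!r$, treated as opaque atoms as in the footnote, with $0$ the bottom); then $s\preceq t$ is equivalent to: for every $i$ there is a $j$ with $T_j\subseteq S_i$. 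The \emph{exact} $\cup$-clause gives $E(\omega,\bigvee_i m_i)=\bigcup_i E(\omega,m_i)$ for the outer joins, and iterating $E(\omega,a+b)\subseteq E(\omega,a)\cap E(\omega,b)$ gives, for $T_j\subseteq S_i$, that $E(\omega,\bigwedge S_i)\subseteq E(\omega,\bigwedge T_j)$, since $\bigwedge S_i$ is $\bigwedge T_j$ with further conjuncts adjoined; assembling these inclusions yields $E(\omega,s)\subseteq E(\omega,t)$. The main obstacle is exactly this lemma: in subset models $+$ is only a sub-meet and is not even commutative (recall $E(\omega,s+\cstar)\neq E(\omega,\cstar+s)$ in general), so the normalisation must be carried out with care — checking that only the ``shrinking'' direction of $+$ together with the exactness of $\cup$ is ever used, which is what makes the needed inclusion propagate in the right direction, and that the $1$-, $!r$- and constant-specification atoms are handled purely as generators. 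Once the monotonicity lemma is established, the remaining cases are inherited from Theorem~\ref{cstar-soundness} and Lemma~\ref{cstar-j-follows} and nothing further is needed.
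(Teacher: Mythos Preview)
Your handling of postulates (1)--(4) coincides with the paper's. For postulate~(5), however, you take a substantially harder route than necessary. The paper simply uses the characterisation $s\preceq t\Leftrightarrow s\cup t=t$ from \eqref{eq:less:1} together with the \emph{exact} equality $E(\omega,s\cup t)=E(\omega,s)\cup E(\omega,t)$ of Definition~\ref{adapted_ssm}: from $t=s\cup t$ one gets
\[
E(\omega,t)=E(\omega,s\cup t)=E(\omega,s)\cup E(\omega,t),
\]
hence $E(\omega,s)\subseteq E(\omega,t)$, and your monotonicity lemma is finished in one line. Your DNF detour is not only longer, it walks straight into the obstacle you yourself flag: rewriting $s$ and $t$ into normal form and comparing conjuncts would require $E$ to respect the $+$-side lattice identities (commutativity, associativity, absorption), which the subset-model conditions do \emph{not} give you, whereas the paper's argument uses only the $\cup$-clause, which \emph{is} an equality by definition. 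So your plan for (5) is at best unnecessarily elaborate and at worst incomplete where you gesture at ``handling'' the non-commutativity of $+$; the paper's one-line trick sidesteps the issue entirely.
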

\begin{proof}
	The proof is by induction on the length of the derivation of $F$:
	\begin{itemize}
		\item If $F$ is derived by axiom necessitation or modus ponens or is an instance of axiom (1),
		then the proof is the analogue as in Theorem \ref{cstar-soundness} since the relevant definitions have remained the same.
		\item If $F$ is an instance of axiom (2) then the proof is analogue to the proof of Lemma \ref{cstar-j-follows}: Suppose $\model,\omega\Vdash s:(A\to B)$ and $\model, \omega\Vdash t:A$ then $E(\omega, s)\subseteq[A\to B]$ and $E(\omega, t)\subseteq[A]$. 
		\begin{multline*}
		E(\omega, st)=E(\omega, s+t+\cstar)\subseteq \\E(\omega, s)\cap E(\omega, t)\cap E(\omega,\cstar)\subseteq[A\to B]\cap[A]\cap E(\omega,\cstar).
		\end{multline*}
		 Hence for all $\upsilon\in E(\omega, st)$ we have $V(\upsilon, A\to B)=1$ and $V(\upsilon, A)=1$ and $\upsilon\in E(\omega,\cstar)$ and therefore $V(\upsilon, B)=1$. Hence $E(\omega, st)\subseteq[B]$ and we obtain $\model,\omega\Vdash st:B$.
		\item If $F$ is an instance of axiom (3) then $F=(s:A\land t:A)\to[s\cup t:A]$ for some $A\in\LJprob, s, t\in\TmP$. Suppose $\model,\omega\Vdash s:A\land t:A$ hence $E(\omega, s)\subseteq[A]$ and $E(\omega, t)\subseteq[A]$. Therefore $E(\omega, s\cup t) \subseteq E(\omega, s)\cup E(\omega, t)\subseteq[A]$ and since $\omega\in W_0$ we obtain $\model,\omega\Vdash s\cup t:A$.
		\item If $F$ is an instance of axiom (4) then either $F=1:A$ for some axiom $A$ or $0:G$ for some formula $G$.\\
		Suppose $F=1:A$ for some axiom $A$. We assume that $\model,\omega\Vdash A$ for all $\omega\in W_0$, hence $E(\omega, 1)=W_0\subseteq[A]$ and therefore $\model,\omega\Vdash 1:A$ for all $\omega\in W_0$.\\
		Suppose $F=0:G$: For any $\omega\in W_0$ we have $E(\omega, 0)=\emptyset$ by Definition \ref{adapted_ssm}. Since $\emptyset$ is a subset of any subset of $W$, we obtain $E(\omega, 0)=\emptyset\subseteq[G]$ for any formula $G\in\LJprob$.
		\item $F$ is an instance of axiom (5). Assume $\model, \omega\Vdash t:X$ for some term $t$ and some formula $X$ and let $s\preceq t$. By \eqref{eq:less:1} we find $t=s\cup t$. Thus 
		\[
		E(\omega, t)=E(\omega, s\cup t)= E(\omega, s)\cup E(\omega, t)
		\]
		  and therefore  $E(\omega, s)\subseteq E(\omega ,t)$. The assumption $\model,\omega\Vdash t:X$ means that $E(\omega, t)\subseteq[X]$. So we get $E(\omega, s)\subseteq[X]$ and conclude $\model,\omega\Vdash s:X$.\qedhere
	\end{itemize}
\end{proof}

\begin{thm}[model existence]
	There exists a $\PE$-adapted $\LJCS$-subset model.
\end{thm}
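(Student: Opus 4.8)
The plan is to construct a concrete $\PE$-adapted $\LJCS$-subset model directly, rather than appealing to a canonical model construction, since the constraints involving $0$, $1$ and $\cup$ are more easily satisfied by an explicit example. The most economical choice is to take a single-world model, or nearly so, and check that every condition in Definition~\ref{cstar-subset models} and the additional conditions of Definition~\ref{adapted_ssm} are met. I would let $W = W_0 = \{\omega\}$ be a singleton consisting of one ``ideal'' world, set $V(\omega, p_i) = 1$ for all atomic propositions (or any classically consistent assignment), extend $V$ to $\to$ and $\perp$ in the forced way, and then define $V(\omega, t:F)$ and $E(\omega, t)$ by a simultaneous recursion so that the clause $V(\omega, t:F) = 1 \iff E(\omega,t) \subseteq [F]$ holds by construction.

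**Key steps.** First I would fix the propositional valuation on $\omega$ and verify it is closed under modus ponens, so that $\omega \in \WMP$; this makes the condition $E(\omega,\cstar) \subseteq \WMP$ trivial to satisfy (indeed $\WMP = W = \{\omega\}$). Second, I would define $E(\omega, t)$ for every term $t \in \TmP$: set $E(\omega, 0) = \emptyset$, and $E(\omega, t) = \{\omega\} = W_0$ for every other term $t$ (in particular $E(\omega,1) = W_0$, $E(\omega, \cstar) = W_0$, and all the lattice identities $E(\omega, s+t) = E(\omega,s) \cap E(\omega,t)$ and $E(\omega, s\cup t) = E(\omega,s) \cup E(\omega,t)$ hold, because $\{\omega\} \cap \{\omega\} = \{\omega\} \cup \{\omega\} = \{\omega\}$ and anything involving $0$ collapses correctly). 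Third, with $E$ in hand, the clause for $V(\omega, t:F)$ is determined: $V(\omega, t:F) = 1$ iff $E(\omega,t) \subseteq [F]$, i.e. iff either $t = 0$ (or contains $0$ in a way forcing $E = \emptyset$) or $V(\omega, F) = 1$. One must check this recursion is well-founded — it is, because the truth of $t:F$ at $\omega$ depends only on membership of $\omega$ in the truth sets of subformulas of $F$ and on $E(\omega,t)$, which was defined without reference to $V$. Fourth, I would run through the remaining structural conditions on $E$: the $\axjd$ clause asks for some $\upsilon \in W_0$ with $\upsilon \in E(\omega,t)$, satisfied by $\upsilon = \omega$ whenever $E(\omega,t)\ne\emptyset$; but note $E(\omega, 0) = \emptyset$ would violate $\axjd$, so if one wants the model to work for logics including $\axjd$ one must either exclude $0$-containing terms from that clause or observe that the theorem only claims existence of \emph{a} model, so we may simply pick $\LJstar$ without $\axjd$, or handle $0$ separately. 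The $\axjt$ clause $\omega \in E(\omega,t)$ holds for all $t \ne 0$; the $\axjfour$ clause and the constant-specification clauses hold because $E(\omega, !t) = \{\omega\}$ and $\{\omega\} \subseteq [t:F]$ exactly when $V(\omega, t:F) = 1$, which matches what we need.

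**Main obstacle.** The delicate point is the interaction between the constant $0$ and the axioms in $\Lstarvol$, especially $\axjd$: the prescription $E(\omega, 0) = \emptyset$ forces $\omega \Vdash 0{:}F$ for every $F$, hence $\omega \Vdash 0{:}\perp$, which is incompatible with $\axjd$ and with the requirement ``$\exists \upsilon \in W_0$ with $\upsilon \in E(\omega, t)$'' applied to $t = 0$. I expect the cleanest resolution is to prove model existence for the base logic $\Lstarcomp$ alone (no optional axioms), where the $\axjd$/$\axjt$/$\axjfour$ clauses are vacuous, so the singleton model above works without friction; if the paper intends the claim for all $\LJstar$, then one must instead use a two-world model with a designated non-normal ``sink'' world $\upsilon \notin W_0$ with $V(\upsilon, \perp) = 1$, put $\upsilon$ into no $E(\omega, t)$ except possibly $E(\omega, 0)$, and keep $\omega$ as the normal world — but then $\axjd$ still fails for $0$, so realistically the theorem should be read as asserting existence of a model for \emph{some} logic in the family, for which the singleton construction suffices and the verification is entirely routine. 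A secondary, minor obstacle is confirming that the simultaneous definition of $V$ on $t{:}F$ formulas and $E$ does not circularly depend on itself; this is avoided by defining $E$ first (it never mentions $V$) and then $V$.
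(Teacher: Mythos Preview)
Your approach is essentially the paper's: both construct a singleton model $W=W_0=\{\omega\}$ with all atoms true and $E$ chosen so that the $\PE$ conditions hold. Your identification of the $\axjd$--$0$ incompatibility is correct; the paper's model, like yours, only witnesses existence for the base logic $\Lstarcomp$ without the optional axioms.

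There is, however, a small but genuine gap in your definition of $E$. You write ``set $E(\omega,0)=\emptyset$ and $E(\omega,t)=\{\omega\}$ for every other term $t$,'' and claim ``anything involving $0$ collapses correctly.'' It does not: the term $0+1$ is syntactically distinct from $0$, so by your rule $E(\omega,0+1)=\{\omega\}$, yet $E(\omega,0)\cap E(\omega,1)=\emptyset\cap\{\omega\}=\emptyset$, violating the subset-model condition $E(\omega,s+t)\subseteq E(\omega,s)\cap E(\omega,t)$. Your parenthetical ``or contains $0$ in a way forcing $E=\emptyset$'' gestures at the fix but is not a definition. The paper resolves this cleanly by using the lattice preorder rather than syntactic identity: it sets $E(\omega,t)=\{\omega\}$ if $t\geq 1$ in $\mathcal{L}_n$ and $E(\omega,t)=\emptyset$ otherwise, and correspondingly $V(\omega,t{:}F)=1$ if and only if $t\not\geq 1$ or $V(\omega,F)=1$. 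Then the meet condition holds because $s+t\geq 1$ implies both $s\geq 1$ and $t\geq 1$, and the join condition $E(\omega,s\cup t)=E(\omega,s)\cup E(\omega,t)$ holds because $1$ is join-irreducible in the free distributive lattice, so $s\cup t\geq 1$ if and only if $s\geq 1$ or $t\geq 1$. This lattice-based definition of $E$ is the missing ingredient that makes your sketch go through.
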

\begin{proof}
We construct a model $\model=\{W, W_0, V, E\}$ as follows:
	\begin{itemize}
		\item $W=W_0=\{\omega\}$.
		\item The valuation function is built bottom up:
		\begin{enumerate}
			\item $V(\omega, \perp)=0$;
			\item $V(\omega, P)=1$, for all $P\in\Prop$;	
			\item $V(\omega, A\to B)=1$ iff $V(\omega, A)=0$ or $V(\omega, B)=1$;
			\item $V(\omega, t:F)=1$ iff $t\not\geq 1$ or if $t\geq 1$ and $V(\omega, F)=1$.
		\end{enumerate} 
		\item $E(\omega, t)=\begin{cases}
		\{\omega\} & \text{ if } t \geq 1\\
		\emptyset & \text{ otherwise}.
		\end{cases}$
	\end{itemize}

It is straightforward to show that $\model$ is indeed a  $\PE$-adapted $\LJCS$-subset model.
Let us only show the condition $E(\omega,s\cup t) = E(\omega,s)\cup E(\omega, t)$.

Suppose first $s, t\not\geq 1$, Then $E(\omega, s\cup t)=\emptyset=E(\omega, s)=E(\omega, t)$ and hence the claim follows immediately.

Suppose at least one term of $s$ and $t$ is in greater than 1, then $E(\omega, s)=\{\omega\}$ or $E(\omega, t)=\{\omega\}$ and hence $E(\omega, s)\cup E(\omega,t)=\{\omega\}$ and since $s\leq s\cup t$ and $t\leq s\cup t$ we obtain $s\cup t\geq 1$ and therefore $E(\omega, s\cup t)=\{\omega\}$, so the claim holds.\qedhere
\end{proof}

Note that we cannot use the canonical model to show that adapted subset models exists since in the canonical model
\[
E(\Gamma, s\cup t)\not\subseteq E(\Gamma, s)\cup E(\Gamma, t).
\]
However, in an adapted model we need these sets to be equal (see Definition~\ref{adapted_ssm}) since otherwise axioms  (3) and (5) would not be sound.

\section{Conclusion}
We  introduced a new semantics, called subset semantics, for justifications. So far, often a symbolic approach was used to interpret justifications. In our semantics, justifications are modeled as sets of possible worlds. 
We also presented a new justification logic that is sound and complete with respect to our semantics. Moroever, we studied a variant of subset models that corresponds to traditional justification logic. 

Subset models provide a versatile tool to work with justifications. In particular, we can naturally extend them with probability measures to capture uncertain justifications. In the last part of the paper, we showed that subset models subsume Artemov's approach to aggregating probabilistic evidence.

\section*{Acknowledgements}
We would like to thank the anonymous referee who found a mistake in the original completeness proof for $\LJCS$.


\begin{thebibliography}{10}

\bibitem{artemov2016onAggregatingPE}
Sergei Artemov.
\newblock On aggregating probabilistic evidence.
\newblock In Sergei Artemov and Anil Nerode, editors, {\em LFCS 2016}, pages
  27--42. Springer, 2016.

\bibitem{Artemov1995OperationalModalLogic}
Sergei~N. Artemov.
\newblock Operational modal logic.
\newblock Technical Report MSI 95--29, Cornell University, December 1995.

\bibitem{Artemov2001ExplicitProvability}
Sergei~N. Artemov.
\newblock Explicit provability and constructive semantics.
\newblock {\em Bulletin of Symbolic Logic}, 7(1):1--36, March 2001.

\bibitem{Art06TCS}
Sergei~[N.] Artemov.
\newblock Justified common knowledge.
\newblock {\em TCS}, 357(1--3):4--22, July 2006.

\bibitem{Art08RSL}
Sergei~[N.] Artemov.
\newblock The logic of justification.
\newblock {\em RSL}, 1(4):477--513, December 2008.

\bibitem{Art12SLnonote}
Sergei~N. Artemov.
\newblock The ontology of justifications in the logical setting.
\newblock {\em Studia Logica}, 100(1--2):17--30, April 2012.

\bibitem{ArtFit11SEP}
Sergei~[N.] Artemov and Melvin Fitting.
\newblock Justification logic.
\newblock In Edward~N. Zalta, editor, {\em The {S}tanford {E}ncyclopedia of
  {P}hilosophy}. Fall 2012 edition, 2012.

\bibitem{artemovFittingBook}
Sergei~[N.] Artemov and Melvin Fitting.
\newblock {\em Justification Logic: Reasoning with Reasons}.
\newblock Cambridge University Press, in preparation.

\bibitem{Artemov2008TopoJL}
Sergei~[N.] Artemov and Elena Nogina.
\newblock Topological semantics of justification logic.
\newblock In Edward~A. Hirsch, Alexander~A. Razborov, Alexei Semenov, and
  Anatol Slissenko, editors, {\em {CSR} 2008,}, pages 30--39. Springer, 2008.

\bibitem{Baltag2016JustifiedBA}
Alexandru Baltag, Nick Bezhanishvili, Aybüke Özgün, and Sonja Smets.
\newblock Justified belief and the topology of evidence.
\newblock In {\em WoLLIC}, 2016.

\bibitem{BucKuzStu11JANCL}
Samuel Bucheli, Roman Kuznets, and Thomas Studer.
\newblock Justifications for common knowledge.
\newblock {\em Applied Non-Classical Logics}, 21(1):35--60, January--March
  2011.

\bibitem{BucKuzStu14Realizing}
Samuel Bucheli, Roman Kuznets, and Thomas Studer.
\newblock Realizing public announcements by justifications.
\newblock {\em Journal of Computer and System Sciences}, 80(6):1046--1066,
  2014.

\bibitem{Fit05APAL}
Melvin Fitting.
\newblock The logic of proofs, semantically.
\newblock {\em APAL}, 132(1):1--25, February 2005.

\bibitem{KuzStu12AiML}
Roman Kuznets and Thomas Studer.
\newblock Justifications, ontology, and conservativity.
\newblock In Thomas Bolander, Torben Bra{\"u}ner, Silvio Ghilardi, and Lawrence
  Moss, editors, {\em Advances in Modal Logic, Volume 9}, pages 437--458.
  College Publications, 2012.

\bibitem{KSweak}
Roman Kuznets and Thomas Studer.
\newblock Weak arithmetical interpretations for the logic of proofs.
\newblock {\em Logic Journal of IGPL}, 24(3):424--440, 2016.

\bibitem{justificationLogic}
Roman Kuznets and Thomas Studer.
\newblock {\em Logics of Proofs and Justifications}.
\newblock College Publications, in preparation.

\bibitem{Mkr97LFCS}
Alexey Mkrtychev.
\newblock Models for the logic of proofs.
\newblock In Sergei Adian and Anil Nerode, editors, {\em Logical {F}oundations
  of {C}omputer {S}cience, 4th International Symposium, {LFCS}'97, {Y}aroslavl,
  {R}ussia, {J}uly 6--12, 1997, Proceedings}, volume 1234 of {\em LNCS}, pages
  266--275. Springer, 1997.

\bibitem{rantala1982a}
V.~Rantala.
\newblock Impossible worlds semantics and logical omniscience.
\newblock {\em Acta Philosophica Fennica}, 35:106--115, 1982.
\newblock cited By 35.

\bibitem{rantala1982b}
Veikko Rantala.
\newblock Quantified modal logic: Non-normal worlds and propositional
  attitudes.
\newblock {\em Studia Logica}, 41(1):41--65, 1982.

\bibitem{Benthem2012EvidenceLA}
Johan van Benthem, David~Fern{\'a}ndez Duque, and Eric Pacuit.
\newblock Evidence logic: A new look at neighborhood structures.
\newblock In {\em Advances in Modal Logic}, 2012.

\bibitem{Benthem2014EvidenceAP}
Johan van Benthem, David~Fern{\'a}ndez Duque, and Eric Pacuit.
\newblock Evidence and plausibility in neighborhood structures.
\newblock {\em CoRR}, abs/1307.1277, 2014.

\end{thebibliography}

\end{document}